\documentclass[10pt]{article}
\usepackage{amsmath}
\usepackage{amssymb}
\usepackage{indentfirst}
\usepackage[latin1]{inputenc}
\usepackage{geometry}
\usepackage{amsfonts}

\setcounter{MaxMatrixCols}{10}

\newcommand{\Z}{{\mathbb Z}}

\newcommand{\Q}{{\mathbb Q}}
\newcommand{\R}{{\mathbb R}}
\newcommand{\C}{{\mathbb C}}

\newcommand{\bc}{\begin{center}}
\newcommand{\ec}{\end{center}}

\newcommand{\U}{{\cal U}}

\newcommand{\qed}{\enspace\vrule  height6pt  width4pt  depth2pt}
\newcommand{\h}{{\mathcal{H}}}   
\newcommand{\oo}{{\mathfrak{o}}} 

\newcommand{\para}{\par\vspace{.25cm}}

\newenvironment{proof}{\par\noindent{\bf Proof.}}{$\qed$\par\bigskip}
\newtheorem{theorem}{Theorem}[section]
\newtheorem{definition}[theorem]{Definition}
\newtheorem{lemma}[theorem]{Lemma}
\newtheorem{corollary}[theorem]{Corollary}
\newtheorem{proposition}[theorem]{Proposition}
\newtheorem{remark}[theorem]{Remark}

\begin{document}

\title{ Free Groups in Quaternion Algebras
\thanks{Mathematics Subject Classification 
Primary [$16U60$, $20E05$]; Secondary [$16S34$, $20M05$].
\newline
Keywords: Hyperbolic Groups, Quaternion Algebras, Free Groups, Free Semigroups, Group Rings, Units , M\"obius transformation.
\newline Research supported by FAPESP(Funda\c c\~ao  de Amparo \`a 
Pesquisa do Estado de S\~ao Paulo), Proc. 2008/57930-1 and CNPq-Brazil} }
\author{Juriaans, S.O. \and Souza Filho, A.C.}
\date{}
\maketitle

\begin{abstract}
In \cite{jpsf} we constructed pairs of units  $u,v$ in $\Z$-orders of a
 quaternion algebra over $\Q (\sqrt{-d})$, $d \equiv 7 \pmod 8$ positive  and square free, such that $\langle u^ n,v^n\rangle$ is free for some $n\in \mathbb{N}$.  Here we extend this result to any imaginary  quadratic extension of $\ \mathbb{Q}$, thus including matrix algebras. More precisely, we show that $\langle u^n,v^n\rangle $ is a free group  for all $n\geq 1$ and $d>2$ and for $d=2$ and all $n\geq 2$. The units we use  arise from Pell's and Gauss' equations. A criterion  for a pair of homeomorphisms  to generate a free semigroup is also established and used to prove that two certain units generate a free semigroup but that, in this case, the Ping-Pong Lemma can not be applied to show that the group they generate is free.
\end{abstract}

\maketitle

\section{Introduction}

The constructions of free groups in algebras is a nontrivial problem and pursued by many researchers. Let $R$ be a ring with unity, $G$ a group, $RG$ the group ring over $R$,  $\U(RG)$ the group of units of $\ RG$ and $\U_1(RG)$ the subgroups of units of augmentation 1.  In \cite{hig}, Higman shows that, in case $G$ is finite and abelian, $\U_1(\Z G)= G \times F$, with  $F$ a finitely generated free abelian group. On the other hand, in \cite{hp}, it is proved that if $G$ is a  finite  non-Hamiltonian $2$-group, then $\U(\Z G)$ contains a free group. A result of Tits, \cite[Lemma 1.5.3]{uni}  gives necessary conditions for a pair of diagonalizable elements of $GL(2, \C)$ to generate a free group. This problem is also addressed in several other papers (see  \cite{gms}, \cite{smg} and \cite{ms}).

The problems of proving the existence of free groups and actually constructing them are two different tasks. In case of the integral group ring of a finite group, $\mathbb{Z}G$ say, the latter problem is related to the construction of a set of generators of the unit group of $\mathbb{Z}G$. Much work has been done in this direction and for most groups, generators are constructed for a subgroup of finite index in the unit group (see \cite{jl}). Actually, the typical case that is yet barely scratched is the case when $\mathbb{Q}G$ has a Wedderburn component which is a division ring, $\mathbb{D}$ say. One important case is when $\mathbb{D}$ is a quaternion algebra. Gon\c calves, Mandel and Shirvani (\cite{gms} ) were among the first to construct free groups in such algebras. But this was still far from constructing a set of generators of  the unit group of an order of a quaternion algebra. This was first achieved by Corrales, Jespers, Leal and del R\'\i o. They actually give a finite algorithm to compute a finite set of
generators of the unit group of an order in a non-split classical quaternion algebra $\mathbb{H}(\mathbb{K})$
 over an imaginary quadratic extension $\mathbb{K}$ of the rationals and then apply the algorithm to an explicit example in case $\mathbb{K}=\mathbb{Q}(\sqrt{-7})$. In doing so, they give the simplest example of a group ring for which no finite set of generators of a subgroup of finite index in its unit group was not yet known. This actual example is $\ \mathbb{Z} [\frac{1+\sqrt{-7}}{2}]K_8$, where $K_8$ is the quaternion group of order eight.

As showed in \cite{jpsf}, the problem above is related to the problem proposed by I. B.S. Passi in \cite{jpp}: {\it Let $G$ be a finite group and $R$ a  ring of characteristic zero. Describe $G$ and $R$ such that the unit group $\U (RG)$ is hyperbolic.} This problem was addressed in \cite{jpp} and later in \cite{jpsf}. As a consequence of the work done in \cite{jpsf},  two new constructions of units,  arising from solutions of Pell's and Gauss' equations,  were given   in the algebras $(\frac{-1,-1}{R})$, where $R=\oo_{\Q(\sqrt{-d})}$ is the ring of algebraic integers of a quadratic extensions of $\ \mathbb{Q}$. These units were  coined Pell and Gauss units.  Gauss units can have norm $-1$.  Hence,  one such Gauss unit and the set of generators of norm $1$ of \cite{cjlr}, give a full set of generators of the unit group of $\ \mathbb{Z} [\frac{1+\sqrt{-7}}{2}]K_8$. A result due to Gromov  shows that this unit group is a hyperbolic group with one end and its hyperbolic boundary is the  $2$-dimensional euclidean sphere. From this and \cite[Proposition III.$\Gamma.3.20$]{mbah}, it follows that if $d \equiv 7 \pmod 8$ is a positive integer, and $u,v\in \U((\frac{-1,-1}{R}))$ are Pell units with distinct supports  then $\langle u^n, v^n \rangle$ is a free group for a suitable positive $n$. 

The problem of determining the exponent $n$ is a non-trivial task and, in general, only its existence is guaranteed  (see for example \cite[Proposition $III.\Gamma.3.20$]{mbah}). Here we determine precisely all the exponents. The units,  used here, have suitable algebraic properties  which are used to  determine the exponent $n$, thus giving a generalization of \cite[Theorem $5.5$]{jpsf}. This is done for quaternion algebras over arbitrary  imaginary quadratic extensions of $\mathbb{Q}$, thus also including matrix algebras.  We note that neither Proposition $3.20$ of \cite{mbah} nor Tits' result can be applied because the norm of the eigenvalues of the units, when considered as matrices,  equals one.    Finally we  give a criterion for two homeomorphisms to generate a free semigroup and apply it to show  that two specific new units generate a free semigroup, while known criteria do not give that they generate a free group.


\section{Preliminaries}

We denote by  $(\frac{-1,\,-1}{K}):=K[i,\,j:i^{2}=-1,\,j^{2}=-1,\,-ji=ij=:k]$  the quaternion
algebra over $K$. Let $K$ be an algebraic  number field and $\mathfrak o_K$ its
ring of integers,  denote by $(\frac{-1,-1}{ o_K})$, the  $\mathfrak o_K$-span of    $\{1,\,i,\,j,k\}$, which is an $\mathfrak
o_K$-algebra. Clearly, since $o_K$ is a $\Z$-order of $K$, the algebra $(\frac{-1,-1}{ o_K})$ is a $\Z$-order of $(\frac{-1,\,-1}{K})$.

Denote by $\sigma$  the isomorphism $\sigma: \h \longrightarrow \h$, $\sigma(x)=j^{-1}xj$ with  $\h:=(\frac{-1,-1}{\C})$.
As a vector space,  $\h$ is isomorphic to $\C \oplus \C j$.   
Since,  elements $z \in \h$ have the form $z=x+yj$, where $x,y \in \C+\C[j]$, the $\mathbb{R}$-monomorphism 
$\Psi: \h \hookrightarrow M_{2}(\C)$, $\Psi(x+yj)= \left (
                                             \begin{array}{ll}x&y\\-\sigma(y)&\sigma(x)
                                             \end{array}\right )$ is well defined. Note that, when restricted to $\C$, $\sigma$ is the complex conjugation and thus  a quaternion algebra is isomorphic to the  complex matrices  of the form $\left (
                                             \begin{array}{ll}x&y\\-\overline{y}&\overline{x}, 
                                             \end{array}\right )$ (see \cite{bea}).

If $M=(m_{ij}) \in GL_2(\C)$, then $M$ defines a M\"obius transformation  $\varphi_M(z):=\frac{m_{11}z+m_{12}}{m_{21}z+m_{22}}$. Hence, $\Psi$ induces a homomorphism $\varphi: \h  \hookrightarrow  {\mathcal{ M}}$, $\varphi (u)=\varphi_u:=\varphi_{\Psi(u)}$ where ${\mathcal{ M}}$ is the group of M\"obius transformations. 

\typeout{We restrict the maps $\sigma$ and $\Psi$ on the quaternion algebra $(\frac{-1,-1}{\Q\sqrt{-d}})\subset \h$. Hence $\Psi((\frac{-1,-1}{\Q\sqrt{-d}})) \subset M_{2}(K)$, where $K=\Q(\sqrt{-1},\sqrt{-d})$ is the $\Q$-span of $\{1,\sqrt{-1},\sqrt{-d},\sqrt{d}\}$. }If $u=u_1+u_ii+u_jj+u_kk \in (\frac{-1,-1}{\Q(\sqrt{-d})})$ then  we have that   $\varphi ((u_1+u_i i)+(u_j +u_k i)j)=\left (
                                             \begin{array}{ll}u_1+u_i \sqrt{-1}&u_j+u_k \sqrt{-1}\\-u_j+u_k \sqrt{-1}&u_1-u_i \sqrt{-1}
                                             \end{array}\right ).$  
                                             
\para
 
In \cite{rob} it is shown that $\langle z+2, \frac{z}{2z+1}\rangle$  is a free subgroup of the group ${\mathcal{ M}}$. This is an application of the Ping-Pong Lemma, \cite[Lemma $3.19$]{mbah}, which states that if $h_1,\ \cdots, \ h_r$ are bijections of a set $\Omega$ and there exist non-empty disjoint subsets $A_{1,1},\ A_{1,-1},\ \cdots,\ A_{r,1},\ A_{r,-1} \subset \Omega$ such that $h_i^\epsilon(\Omega \setminus A_{i,\epsilon}) \subset A_{i,-\epsilon}$, for $\epsilon \in \{-1,1\}$ and $i=1,\ \cdots,\ r$, then $\langle h_1,\ \cdots,\ h_r\rangle $ generates a free subgroup of rank $r$ in Perm($\Omega$). In fact,  $h_1(z):=\frac{z}{2z+1}$ and $h_2(z):=z+2$  are bijections of $\Omega:=\R \cup \{\infty \}$. It is easily verified that the sets $A_{1,1}:=]-1,0[, \ A_{1,-1}:=[0,1]$, $\ A_{2,1}:=[-\infty,-1], \ \textrm{and} \ A_{2,-1}:=]1,\infty]$ are in the conditions of the Ping-Pong Lemma.

As a consequence we have that:

\begin{theorem} The group generated by the units $u=\sqrt{-1}+(-\sqrt{-1}+i)j$ and $w=\sqrt{-1}+(\sqrt{-1}+i)j$ in the algebra $(\frac{-1,-1}{\mathfrak o_{\Q(\sqrt{-1})}})\ $ is free.
\end{theorem}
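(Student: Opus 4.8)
The plan is to translate the two units $u$ and $w$ into M\"obius transformations via the map $\varphi$ described above, and then reduce the freeness question to the already-cited fact that $\langle z+2,\ \frac{z}{2z+1}\rangle$ is free in $\mathcal{M}$. Concretely, I would first apply the explicit formula $\varphi((u_1+u_i i)+(u_j+u_k i)j)=\left(\begin{array}{ll}u_1+u_i\sqrt{-1}&u_j+u_k\sqrt{-1}\\-u_j+u_k\sqrt{-1}&u_1-u_i\sqrt{-1}\end{array}\right)$ to $u=\sqrt{-1}+(-\sqrt{-1}+i)j$ (so $u_1=0$, $u_i=1$, $u_j=-1$, $u_k=1$) and to $w=\sqrt{-1}+(\sqrt{-1}+i)j$ (so $u_1=0$, $u_i=1$, $u_j=1$, $u_k=1$). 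This yields two explicit matrices in $GL_2(\C)$; compute their associated M\"obius transformations $\varphi_u$ and $\varphi_w$.

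Next I would exhibit a single M\"obius transformation $g$ (equivalently a matrix in $GL_2(\C)$, possibly involving $\sqrt{-1}$) conjugating the pair $(\varphi_u,\varphi_w)$ to the pair $(h_2,h_1)=(z+2,\ \frac{z}{2z+1})$, or at least to a pair for which the Ping-Pong sets can still be written down. The natural guess is that conjugation by a diagonal or anti-diagonal matrix built from $\sqrt{-1}$ will clear the imaginary entries and land the transformations on the real line; since $\varphi$ is a homomorphism and conjugation is an automorphism of $\mathcal{M}$, freeness is preserved, so $\langle\varphi_u,\varphi_w\rangle$ is free. Because $\varphi$ (coming from the injective map $\Psi$) is a monomorphism on the quaternion algebra, the restriction to the unit group is injective on the subgroup generated by $u$ and $w$, hence $\langle u,w\rangle\cong\langle\varphi_u,\varphi_w\rangle$ is free.

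The step I expect to be the main obstacle is verifying that such a conjugating element $g$ genuinely exists and does what is claimed — i.e. that the specific matrices produced by $\varphi$ really are $GL_2(\C)$-conjugate to the Robinson generators (up to reindexing and up to scalars, which do not affect the induced M\"obius map). If a direct conjugation is not available, the fallback is to run the Ping-Pong Lemma directly for $\varphi_u,\varphi_w$: find four disjoint non-empty subsets $A_{1,\pm1},A_{2,\pm1}$ of $\hat{\C}$ (or of a suitable conjugate of $\R\cup\{\infty\}$) with $\varphi_u^{\epsilon}$ and $\varphi_w^{\epsilon}$ mapping the complements of the appropriate sets into the opposite sets. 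The computations are routine once the matrices are in hand; the only real care needed is bookkeeping with $\sqrt{-1}$ and checking the determinants are nonzero so that the transformations are genuine bijections of the Riemann sphere. Finally, one records that $u,w$ are indeed units in $(\frac{-1,-1}{\mathfrak o_{\Q(\sqrt{-1})}})$ by computing their reduced norms, which are units in $\mathfrak o_{\Q(\sqrt{-1})}=\Z[\sqrt{-1}]$.
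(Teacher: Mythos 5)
Your proposal follows essentially the same route as the paper, which states the theorem as an immediate consequence of Robinson's example: applying the displayed formula for $\varphi$ to $u$ (with $u_1=\sqrt{-1},u_i=0,u_j=-\sqrt{-1},u_k=1$) and to $w$ (with $u_j=\sqrt{-1}$) gives $\Psi(u)=\left(\begin{smallmatrix}\sqrt{-1}&0\\2\sqrt{-1}&\sqrt{-1}\end{smallmatrix}\right)$ and $\Psi(w)=\left(\begin{smallmatrix}\sqrt{-1}&2\sqrt{-1}\\0&\sqrt{-1}\end{smallmatrix}\right)$, i.e.\ $\varphi_u(z)=\frac{z}{2z+1}$ and $\varphi_w(z)=z+2$ exactly, so the conjugating element you worry about is the identity and the obstacle you flag dissolves upon computation. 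One small correction: injectivity of $\Psi$ does not by itself make $\varphi$ injective on $\langle u,w\rangle$ (scalar matrices act trivially in $\mathcal{M}$), but this is harmless, since a group generated by two elements whose images form a free basis of a free group of rank two admits no nontrivial relation and is therefore itself free.
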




\section{Free Groups in Quaternion Algebras}

In the sequel,  $K=\Q(\sqrt{-d})$  is  an imaginary quadratic
extension with  $d$ a positive and square-free integer. Let $\xi \neq \psi$ be elements of $\{1,\,i,\,j,\,k\}$.
 Suppose \begin{equation}u:=m\sqrt{-d} \xi+p \psi, \textrm{where}\  p,\,m\in \mathbb Z,\end{equation}
 is an element in $(\frac{-1,-1}{\mathfrak o_K})$ having  norm $1$.
 Then \begin{equation}\label{Pell}p^2-m^2d=1,\end{equation} i.e., the pair $(p,\,m)$ is a solution of
  Pell's equation $x^2-dy^2=1$ in $\Q(\sqrt{d})$.
  Equation (\ref{Pell}) implies that $\epsilon = p+m\sqrt{d}$
 is a unit in $\mathfrak o_{\Q(\sqrt{d})}$. Conversely, if $\epsilon=x+y\sqrt{d}$ is a unit of norm $1$ in $\mathfrak o_{\Q(\sqrt{d})}$
  then, necessarily, $x^2-y^2d=1$, and, therefore,  for any choice of $\xi,\,\psi$ in $\{1,\,i,\,j,\,k\}$, $\xi\neq \psi$, \begin{equation}\label{2pell}
 y\sqrt{-d}\xi+x\psi\end{equation} is a unit in $(\frac{-1,-1}{\mathfrak o_K})$. In particular, if \begin{equation}u_{(\epsilon,\,\psi)}:=x+y\sqrt{-d}\psi,\ \psi\in \{i,\,j,\,k\}, \end{equation} then $u_{(\epsilon,\,\psi)}$ is a unit in $(\frac{-1,-1}{\mathfrak o_K})$.
 \para
 If $u=u_1+u_ii+u_jj+u_kk$, we define $supp(u) = \{u_\xi \neq 0, \ \xi \in \{1,i,j,k\}\}$, the support of $u$.
 \para
 
 With the notations as above, we have:
 \para
\begin{proposition}\cite{jpsf}\label{gcpell}
\begin{enumerate}
\item If $1 \notin supp(u)$, the support of  $u$, then $u$ is a torsion unit.
\item If  $\epsilon=x + y\sqrt{d}$  is a unit in $\mathfrak o_{\Q(\sqrt{d})}$, then $$
 u^n_{(\epsilon,\,\psi)}=u_{(\epsilon^n,\,\psi)}$$for all $\psi\in \{i,\,j,\,k\} $ and  $n\in \mathbb Z$.
\end{enumerate}
\end{proposition}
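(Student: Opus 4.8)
\medskip\noindent\textbf{Proof proposal.}
The plan is to prove the two parts independently, each by a short direct computation inside the quaternion algebra rather than by quoting \cite{jpsf}.

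For part (1), I would first recall that for $u=u_1+u_ii+u_jj+u_kk$ the conjugate is $\bar u=u_1-u_ii-u_jj-u_kk$ and the reduced norm is the formal identity $N(u)=u\bar u=u_1^2+u_i^2+u_j^2+u_k^2$, valid over any coefficient ring. Then I would observe that, since $u=m\sqrt{-d}\,\xi+p\,\psi$ involves only the two distinct basis elements $\xi,\psi$, the hypothesis $1\notin\supp(u)$ forces $u_1=0$: if $\xi$ or $\psi$ equals $1$ the corresponding coefficient must vanish, and otherwise $u_1=0$ outright. Hence $u$ is a pure quaternion, $\bar u=-u$, and the norm hypothesis $N(u)=1$ becomes $-u^2=u\bar u=1$, i.e.\ $u^2=-1$; thus $u^4=1$ and $u$ is torsion (of order $4$). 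There is no real obstacle here.

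For part (2), the idea is to exhibit $u_{(\epsilon,\psi)}$ as the image of $\epsilon$ under a ring homomorphism and then simply use multiplicativity. Set $t:=\sqrt{-d}\,\psi$. Since $\psi^2=-1$ for each $\psi\in\{i,j,k\}$ and the scalar $\sqrt{-d}$ is central, $t^2=(\sqrt{-d})^2\psi^2=(-d)(-1)=d$ and $t$ commutes with $\Q$; hence $\Q+\Q t$ is a commutative subring of $(\frac{-1,-1}{K})$, and since the only relation satisfied by $\sqrt d$ over $\Q$ is $(\sqrt d)^2=d$, the assignment $\sqrt d\mapsto t$ extends to a unital ring homomorphism $\phi:\Q(\sqrt d)\to(\frac{-1,-1}{K})$, $x+y\sqrt d\mapsto x+y\sqrt{-d}\,\psi$. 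Now $\phi(\epsilon)=u_{(\epsilon,\psi)}$ and $\phi(\epsilon^n)=u_{(\epsilon^n,\psi)}$ for every $n$, so $u_{(\epsilon,\psi)}^n=\phi(\epsilon^n)=u_{(\epsilon^n,\psi)}$ for all $n\geq1$; the case $n=0$ is trivial, and for $n<0$ I would use that $\epsilon$ is a unit of $\mathfrak o_{\Q(\sqrt d)}$ (so $\epsilon^{-1}$ lies there) together with the fact that ring homomorphisms preserve inverses, obtaining $u_{(\epsilon,\psi)}^{-1}=\phi(\epsilon^{-1})=u_{(\epsilon^{-1},\psi)}$ and hence the identity for all $n\in\Z$. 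An equivalent low-tech alternative is induction on $n\geq1$: multiply $u_{(\epsilon^{n},\psi)}$ on the right by $u_{(\epsilon,\psi)}$, match the result with the expansion of $\epsilon^{n+1}=(x_n+y_n\sqrt d)(x+y\sqrt d)$ using $t^2=d$, and extend to $\Z$.

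The one step that is more than bookkeeping is checking that $\phi$ is well defined, that is, that $\Q+\Q t$ really is commutative and $t^2=d$; this is the substantive content of part (2), but it collapses to the single computation $(\sqrt{-d}\,\psi)^2=d$, immediate from $\psi^2=-1$. So I anticipate no genuine obstacle, only a little care with the quaternion conventions.
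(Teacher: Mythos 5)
Your proof is correct. Note that the paper itself gives no argument here---Proposition \ref{gcpell} is simply quoted from \cite{jpsf}---so there is nothing to diverge from; your two computations (a pure quaternion of norm $1$ satisfies $u^2=-u\bar u=-1$, hence $u^4=1$; and $\sqrt{d}\mapsto\sqrt{-d}\,\psi$ defines an embedding of $\Q(\sqrt d)$ into the quaternion algebra under which $\epsilon\mapsto u_{(\epsilon,\psi)}$, so powers correspond to powers) are exactly the standard ones underlying the cited result, and your handling of $n\le 0$ via $\epsilon$ being a unit is fine.
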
\para
\para

Units of type (\ref{2pell})  are called {\it Pell
$2$-units}. Denote the norm of $\epsilon=x+y\sqrt{d}$ by $\mathcal{N}( \epsilon):=\epsilon \overline{\epsilon}$, where $\overline{\epsilon}=x-y\sqrt{d}$. 
\para 

Likewise, we define the Pell $4$-unit 
$u:= \frac{y}{2}\sqrt{-d}\zeta+(\frac{y}{2}\sqrt{-d})\xi+(\frac{1 \pm x}{2})\psi+(\frac{1 \mp x}{2})\phi$, where $\zeta,\xi,\psi,\phi \in \{1,i,j,k\}$ are distinct two by two. Clearly $u \overline{u}=\frac{\mathcal{N}( \epsilon)+1}{2}$ is the norm of $u$, since $\mathcal{N}(\epsilon)=1$ we have that $u$ is a unit.
\para
 
 \begin{remark}$\empty$\\
\begin{itemize}
\item We cannot define a Pell $4$-unit when $\mathcal{N}(\epsilon)=-1$, since such units have norm $\frac{\mathcal{N}(\epsilon)+1}{2} \neq 0$.  
\item When $y \equiv 1 \pmod 2$, the invertible $\epsilon^2$ defines a Pell $4$-unit $u:=xy\sqrt{-d}+(xy\sqrt{-d})i+(x^2)j+(y^2d)k$ (see \cite{jpsf}).
\item If the invertible $\epsilon=(2x-1)+y\sqrt{2d} \in \Q(\sqrt{2d})$ has norm $\mathcal{N}( \epsilon)=1$, we define a Pell $3$-unit $u:=y\sqrt{-d} \xi + x \psi + (1-x) \phi$ in $(\frac{-1,-1}{\mathfrak o_{\Q( \sqrt{-d})}})$, where $\xi,\psi,\phi \in \{1,i,j,k\}$ are distinct.
\end{itemize}
\end{remark}

\begin{proposition} \label{pp1}
Let $\epsilon=x+y\sqrt{d}$ be the fundamental invertible in $\Q(\sqrt{d})$, $d>1$, with $\mathcal{N}(\epsilon)=1$. Then the  elements $x+(y\sqrt{-d})i$, $y\sqrt{-d}+(x)k$, $\frac{x+1}{2}-(\frac{y\sqrt{-d}}{2})i+(\frac{x-1}{2})j+(\frac{y\sqrt{-d}}{2})k$ and  $x^2-(xy\sqrt{-d})i-(y^2d)j+(xy\sqrt{-d})k\}$ are units in the quaternion algebra $(\frac{-1,-1}{\Q\sqrt{-d}})$
\end{proposition}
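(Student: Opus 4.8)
\noindent\emph{Proof plan.} The plan is to reduce the statement to a reduced-norm computation. Recall that in the quaternion algebra $(\frac{-1,-1}{K})$ over a field $K$ the reduced norm of $u=u_1+u_ii+u_jj+u_kk$ is $u\overline{u}=u_1^2+u_i^2+u_j^2+u_k^2$, and that $u$ is invertible exactly when $u\overline{u}\neq 0$, with $u^{-1}=(u\overline{u})^{-1}\overline{u}$ in that case. Hence it suffices to check that each of the four displayed elements has reduced norm $1$; the only arithmetic input is Pell's equation $x^2-y^2d=1$, equivalently $\mathcal{N}(\epsilon)=x^2-y^2d=1$, together with $(\sqrt{-d})^2=-d$.

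First I would note that $x+(y\sqrt{-d})i$ and $y\sqrt{-d}+(x)k$ are Pell $2$-units of the shape (\ref{2pell}) --- the first being $u_{(\epsilon,i)}$ --- so by the discussion following (\ref{2pell}) they are units; concretely, each has reduced norm $x^2+(y\sqrt{-d})^2=x^2-y^2d=1$. The element $\frac{x+1}{2}-(\frac{y\sqrt{-d}}{2})i+(\frac{x-1}{2})j+(\frac{y\sqrt{-d}}{2})k$ is, up to signs that the norm ignores, a Pell $4$-unit, and its reduced norm is
\[
\frac{(x+1)^2+(x-1)^2}{4}+2\cdot\frac{(y\sqrt{-d})^2}{4}=\frac{2(x^2-y^2d)+2}{4}=\frac{2\mathcal{N}(\epsilon)+2}{4}=1 ,
\]
in agreement with the formula $u\overline{u}=\frac{\mathcal{N}(\epsilon)+1}{2}$ recorded above for Pell $4$-units. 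Finally, $x^2-(xy\sqrt{-d})i-(y^2d)j+(xy\sqrt{-d})k$ is a Pell $4$-unit built from $\epsilon^2$ (in the spirit of the Remark, and compatible with Proposition~\ref{gcpell}(2)); its reduced norm is
\[
x^4+2(xy\sqrt{-d})^2+y^4d^2=x^4-2x^2y^2d+y^4d^2=(x^2-y^2d)^2=\mathcal{N}(\epsilon)^2=1 .
\]
Since all four elements have nonzero reduced norm, each is a unit in $(\frac{-1,-1}{\Q(\sqrt{-d})})$.

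I do not expect a genuine obstacle here: over the field $\Q(\sqrt{-d})$, ``unit'' simply means ``invertible'', so the proposition is a finite verification, the only substantive ingredient being $\mathcal{N}(\epsilon)=1$. The one point worth a comment is that this is a statement about the algebra and not about the $\mathfrak o_K$-order: the third element has coefficients with denominator $2$ and so lies in $(\frac{-1,-1}{\mathfrak o_{\Q(\sqrt{-d})}})$ only under a parity hypothesis on $x$ (automatic when $y$ is even), but this is irrelevant to invertibility in the algebra. The actual content of the proposition is therefore organizational --- recognizing these explicit elements as instances of the Pell $2$- and $4$-unit constructions, so that they are available as concrete generators in the free-group arguments to follow.
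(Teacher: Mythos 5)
Your proof is correct and takes essentially the same approach as the paper: the paper's own (very terse) proof asserts the first three elements are units via the preceding Pell $2$- and $4$-unit discussion and checks only the fourth by computing $u\overline{u}=(\mathcal{N}(\epsilon))^2=1$, which is exactly your norm computation carried out for all four elements. Your side remark that invertibility is a statement about the algebra rather than the $\mathfrak o_K$-order is accurate but not needed.
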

\begin{proof}
The first three elements are units. We only need to verify the element $w=x^2-(xy\sqrt{-d})i-(y^2d)j+(xy\sqrt{-d})k\}$ whose norm $u \overline{u}=(\mathcal{N}(\epsilon))^2=1$ and hence is a unit.
\end{proof}

We recall the well known three square Theorem of Gauss: {\it Let $n$ be a positive integer, where $n=4^a n^{'}$, $ 4  \nmid n^{'}$ and $0 \leq a$. Then  $n$ is a sum of three square integers if, and only if, $n^{'} {\not \equiv} 7 \pmod 8$ (see \cite{small}).} As a consequence of this result we have that if $d \equiv 7 \pmod 8$ is positive and $m \equiv 2 \pmod 4$ then there exist integers $p,q,r$, such that, $m \sqrt{-d} +p i+ q j+ r k$ is a unit of the quaternion algebra over $\Q \sqrt{-d}$.

\begin{definition}
Let $u:=m \sqrt{-d} +p i+ q j+ r k$ be a unit with $l:=|supp(u)|>1$, $m,p,q,r$ integers, $m \neq 0$, $d$ a positive square free integer and $(m^2d \pm 1)$ the sum of three square integers. Then  $u$ is called a Gauss unit or Gauss $l$-unit.
\end{definition}

Note that if $\epsilon=x+y\sqrt{d}$ has norm $\mathcal{N}(\epsilon)=-1$, then the unit $y\sqrt{-d}\xi+x\psi$, $\xi, \psi \in \{1, i,j,k\}$ and $\xi \neq \psi$, is a Gauss $2$-unit. We may  represent Gauss  and Pell units as M\"obius transformations. Using the  algebraic relation between the coefficients of the units, we will  construct free groups using suitable pairs of these units.

\begin{proposition}\label{mbs}
Let $u=x+(y\sqrt{-d})i$ and $w \in \{ \{y\sqrt{-d}+(x)k,\ \frac{x+1}{2}-(\frac{y\sqrt{-d}}{2})i+(\frac{x-1}{2})j+(\frac{y\sqrt{-d}}{2})k, \ x^2-(xy\sqrt{-d})i-(y^2d)j+(xy\sqrt{-d})k\}$ be the units of the Proposition \ref{pp1}. If we represent  $u$ and $w$ as a M\"obius transformation, then $\varphi_u(z)=\frac{x-y\sqrt{d}}{x+y\sqrt{d}}z$ is a homothety and $\varphi_w(z)=\frac{az+b}{cz+d}$, where $a,b,c,d$ are non-zero. 
\end{proposition}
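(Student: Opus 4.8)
The plan is to compute everything explicitly from the matrix representation $\Psi$ (equivalently the M\"obius map $\varphi$) recorded in Section 2, and then to verify that the relevant matrix entries do not vanish, using only the Pell relation $x^{2}-y^{2}d=1$ together with the standard facts that, for the fundamental unit $\epsilon=x+y\sqrt{d}>1$ of a real quadratic order with $\mathcal{N}(\epsilon)=1$, one has $x,y>0$, $\sqrt{d}>0$, $x^{2}=1+y^{2}d>1$, and $(x+y\sqrt{d})(x-y\sqrt{d})=\mathcal{N}(\epsilon)=1$ (so in particular $x\pm y\sqrt{d}\neq0$).

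For $u=x+(y\sqrt{-d})i$ the coefficient vector is $(u_{1},u_{i},u_{j},u_{k})=(x,\,y\sqrt{-d},\,0,\,0)$, so the formula for $\varphi$ produces the diagonal matrix $\Psi(u)=\left(\begin{smallmatrix} x+y\sqrt{-d}\sqrt{-1} & 0\\ 0 & x-y\sqrt{-d}\sqrt{-1}\end{smallmatrix}\right)$. Fixing the square roots so that $\sqrt{-d}\sqrt{-1}=-\sqrt{d}$ (the opposite choice merely transposes the two diagonal entries) and using that $\varphi_{M}$ depends on $M$ only up to scalars, one finds $\varphi_{u}(z)=\dfrac{x-y\sqrt{d}}{x+y\sqrt{d}}\,z$, a homothety whose dilation ratio is the real number $\epsilon^{-1}\in(0,1)$.

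Next I would substitute each of the three remaining units of Proposition \ref{pp1} into the same formula and simplify with $\sqrt{-d}\sqrt{-1}=-\sqrt{d}$, obtaining the three matrices $\Psi(y\sqrt{-d}+xk)=\left(\begin{smallmatrix} y\sqrt{-d} & x\sqrt{-1}\\ x\sqrt{-1} & y\sqrt{-d}\end{smallmatrix}\right)$, $\Psi\!\left(\tfrac{x+1}{2}-\tfrac{y\sqrt{-d}}{2}i+\tfrac{x-1}{2}j+\tfrac{y\sqrt{-d}}{2}k\right)=\tfrac{1}{2}\left(\begin{smallmatrix} x+1+y\sqrt{d} & x-1-y\sqrt{d}\\ 1-x-y\sqrt{d} & x+1-y\sqrt{d}\end{smallmatrix}\right)$, and, after writing $yd\pm x\sqrt{d}=\sqrt{d}\,(y\sqrt{d}\pm x)$, $\Psi\!\left(x^{2}-xy\sqrt{-d}\,i-y^{2}d\,j+xy\sqrt{-d}\,k\right)=\left(\begin{smallmatrix} x(x+y\sqrt{d}) & -y\sqrt{d}\,(x+y\sqrt{d})\\ -y\sqrt{d}\,(x-y\sqrt{d}) & x(x-y\sqrt{d})\end{smallmatrix}\right)$. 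In each case $\varphi_{w}(z)=\frac{az+b}{cz+d}$ where $a,b,c,d$ are the four entries of $\Psi(w)$ (an invertible matrix, as $\Psi$ is a ring monomorphism and $w$ a unit), so it remains to show none of these four entries is $0$.

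That last check is the only step beyond pure bookkeeping, and it is the mild obstacle. For the first and third matrices each entry is a product of factors drawn from $\{x,\,y,\,\sqrt{d},\,\sqrt{-1},\,x+y\sqrt{d},\,x-y\sqrt{d}\}$, all non-zero by the facts listed above. For the second matrix the entries are $\tfrac12(x+1\pm y\sqrt{d})$ on the diagonal and $\tfrac12(x-1-y\sqrt{d}),\ \tfrac12(1-x-y\sqrt{d})$ off it; the vanishing of any one of them would amount to an equality of the form $\pm(x-1)=\pm y\sqrt{d}$ or $\pm(x+1)=\pm y\sqrt{d}$, and squaring such an equality and using $y^{2}d=(x-1)(x+1)$ forces $x^{2}=1$, hence $y=0$ by the Pell relation, contradicting that $\epsilon$ is a fundamental unit. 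Assembling the four displayed matrices then proves the proposition.
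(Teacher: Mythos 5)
Your proof is correct and follows essentially the same route as the paper: apply the representation $\Psi$ to each unit, read off the induced M\"obius transformation, and identify $\varphi_u$ as the homothety with ratio $\frac{x-y\sqrt{d}}{x+y\sqrt{d}}$. You are in fact slightly more thorough than the paper, which merely displays the three transformations and leaves the non-vanishing of the coefficients implicit, whereas you verify it explicitly from $y^{2}d=(x-1)(x+1)$.
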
 
\begin{proof}
If $u=x+(y\sqrt{-d})i+0j$, then  $\Psi(u)=\left (
                                             \begin{array}{ll}x-y \sqrt{-d}\sqrt{-1}&0\\0&x+y \sqrt{-d}\sqrt{-1}
                                             \end{array}\right )$. Thus $\varphi_u(z)=\frac{x-y \sqrt{d}}{x+y \sqrt{d}}z$. If  $w=y\sqrt{-d}+((x)i)j$, hence $\Psi(w)=\left (
                                             \begin{array}{ll}y \sqrt{-d}&x \sqrt{-1}\\x \sqrt{-1} &y \sqrt{-d}
                                             \end{array}\right )$. Thus $\varphi_w(z)=\frac{y\sqrt{d}z+x}{xz+y\sqrt{d}}$. Proceeding this way, if $w=\frac{x+1}{2}-(\frac{y\sqrt{-d}}{2})i+(\frac{x-1}{2}+(\frac{y\sqrt{-d}}{2})i)j$, then $\varphi_w(z)=\frac{(y\sqrt{d}+(x+1))z-(y\sqrt{d}-(x-1))}{-(y\sqrt{d}+(x-1))z-(y\sqrt{d}-(x+1))}$. If $w=x^2-(xy\sqrt{-d})i+(-y^2d+(xy\sqrt{-d})i)j$, then $\varphi_w(z)=(\frac{x+y\sqrt{d}}{x-y\sqrt{d}})\frac{xz-y\sqrt{d}}{-(y\sqrt{d})z+x}$.
\end{proof}

 By  the previous  proposition, if $\epsilon=x+y\sqrt{d}$ is the fundamental invertible and $u=x+(y\sqrt{-d})i$, then
 $\varphi_u(z)=\frac{x-y\sqrt{d}}{x+y\sqrt{d}}z \in \mathcal{M}$. Since $x,y$ are always positive, if $\mathcal{N}(\epsilon)=1$, then $x+y\sqrt{d}>1$ and 
 $0<x-y\sqrt{d}<1$. Hence, we have that $\varphi_u(z)=\rho z$ with $\rho=\frac{x-y\sqrt{d}}{x+y\sqrt{d}} \in ]0,1[$.

  Let  $\varphi(z)=\frac{az+b}{cz+d}$, leaving  $\Omega :=\mathbb{R}\cup {\infty}$ invariant, and $z_p:=\frac{-d}{c}$ and $z_0:=\frac{-b}{a}$, respectively its  pole and  zero. Given a Pell unit $w$, the pole $z_p$ of $\varphi \in \{\varphi_w,\varphi_w^{-1}\}$ plays an important role in the arguments we will use in the sequel.    
  
\begin{lemma}\label{ntrvl}
Let $w$ be as in the previous proposition, $z_p$ the pole of $\varphi_w$ and $I=]a,b[ \subset \Omega$ an interval. If $z_p \in I$ then $J:=\varphi_w(\Omega \setminus I)$ is an interval. Furthermore, $J$ is either $[\varphi_w(a),\varphi_w(b)]$ or $[\varphi_w(b),\varphi_w(a)]$.
\end{lemma}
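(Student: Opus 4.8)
The plan is to use that $\varphi_w$, restricted to $\Omega=\mathbb{R}\cup\{\infty\}$, is a homeomorphism of the topological circle $\Omega$, and then to follow arcs through it. Since $w$ is a unit, $\varphi_w$ is a genuine M\"obius transformation of $\mathbb{C}\cup\{\infty\}$; by Proposition~\ref{mbs} its coefficients $a,b,c,d$ are all non-zero, and by hypothesis $\varphi_w(\Omega)=\Omega$, so $\varphi_w|_{\Omega}\colon\Omega\to\Omega$ is a continuous bijection of a compact Hausdorff space and hence a homeomorphism. Two facts about it will be used: the pole $z_p=-d/c$ is the unique point of $\Omega$ sent to $\infty$ (uniqueness by injectivity), and $\varphi_w(\infty)=a/c$ is finite because $c\neq 0$.

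Now view $I=\,]a,b[$ as an open arc of $\Omega$ with endpoints $a$ and $b$. Its complement $\Omega\setminus I$ is the closed arc of $\Omega$ with those same endpoints; since $z_p$ is finite and lies in $I$ we have $\infty\notin I$, hence $\infty\in\Omega\setminus I$, so $\Omega\setminus I$ is the closed arc joining $a$ to $b$ through $\infty$. Being a homeomorphism of $S^1$, $\varphi_w$ maps this closed arc onto a closed arc $J:=\varphi_w(\Omega\setminus I)$, and it sends the boundary of the arc to the boundary of the image arc, so $\partial J=\{\varphi_w(a),\varphi_w(b)\}$, these two points being distinct by injectivity. Finally $z_p\notin\Omega\setminus I$, so the value $\infty=\varphi_w(z_p)$ is not attained on $\Omega\setminus I$ and $\infty\notin J$; a closed arc of $\Omega$ that avoids $\infty$ is exactly a closed bounded interval of $\mathbb{R}$. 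Therefore $J=[m,M]$ with $\{m,M\}=\{\varphi_w(a),\varphi_w(b)\}$, which is precisely the claim.

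The argument is short, so there is no genuinely hard step; the points to treat carefully are (i) the elementary topology — a continuous bijection of the compact circle $\Omega$ is a homeomorphism, and a homeomorphism of $S^1$ carries closed arcs to closed arcs and their boundaries to boundaries — and (ii) the bookkeeping at $\infty$: one must check $\infty\in\Omega\setminus I$ and $\varphi_w(\infty)\ne\infty$ so that no ``wrap-around'' occurs and $J$ comes out as an honest interval rather than the complement of one. If one prefers to avoid circle topology, the same conclusion is reached computationally: up to a real scalar $\varphi_w$ has real coefficients (it preserves $\mathbb{R}\cup\{\infty\}$), so $\varphi_w'(z)=\dfrac{ad-bc}{(cz+d)^2}$ has constant sign on each of the intervals $[b,+\infty[$ and $]-\infty,a]$ (neither contains the pole, since $a<z_p<b$); thus $\varphi_w$ is strictly monotone on each, with common limit $a/c$ at $\infty$, and gluing the two image subintervals along the shared value $a/c$ — by injectivity their only common point — produces the interval whose outer endpoints are $\varphi_w(a)$ and $\varphi_w(b)$.
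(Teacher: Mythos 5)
Your proof is correct and is essentially the fully written-out version of the paper's argument: the paper's entire proof is the single sentence ``This is a consequence of $\Omega=\R\cup\{\infty\}$'', i.e.\ exactly the circle-compactification observation you develop (the complement of a bounded open interval containing the pole is a closed arc through $\infty$ avoiding the pole, so its image is a closed arc avoiding $\infty$, hence a bounded closed interval with endpoints $\varphi_w(a)$ and $\varphi_w(b)$).
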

\begin{proof} This is a consequence of $\Omega=\R \cup \{\infty\}$
\end{proof}

%

\begin{proposition} \label{gnrl}
Let $\varphi_w$ be the M\"obius transformations of the Proposition \ref{mbs}, $z_p$(respectively $z^{'}_p$) the pole and $z_0$(respectively $z^{'}_0$) the zero of $\varphi_w$(respectively $\varphi^{-1}_w)$. If $\varphi_w(z)=\frac{y\sqrt{d}z+x}{xz+y\sqrt{d}}$ then $I:=]\varphi_w(\frac{3}{2}z_0),\varphi_w(\frac{1}{2}z_p)[ \subset [-\frac{1}{2}z_p,-\frac{3}{2}z_0]$ and  
$J:=]\varphi^{-1}_w(-\frac{1}{2}z_p),\varphi^{-1}_w(-\frac{3}{2}z_0)[  \subset [\frac{3}{2}z_0,\frac{1}{2}z_p]$. If $\varphi_w(z)=\frac{(y\sqrt{d}+(x+1))z-(y\sqrt{d}-(x-1))}{-(y\sqrt{d}+(x-1))z-(y\sqrt{d}-(x+1))}$, then $I:=]\varphi_w(3z_p),\varphi_w(\frac{1}{2}z_0)[ \subset [3z^{'}_p,\frac{1}{2}z^{'}_0]$ and  
$J:=]\varphi^{-1}_w(-\frac{1}{2}z^{'}_0),\varphi^{-1}_w(3z^{'}_p)[  \subset [\frac{1}{2}z_0,3z_p]$.
 If $\varphi_w(z)=(\frac{x+y\sqrt{d}}{x-y\sqrt{d}})\frac{xz-y\sqrt{d}}{-(y\sqrt{d})z+x}$, then $I:=]\varphi_w(3z_p),\varphi_w(\frac{1}{2}z_0)[ \subset [3z^{'}_p,\frac{1}{2}z^{'}_0]$ and  
$I:=]\varphi^{-1}_w(-\frac{1}{2}z^{'}_0),\varphi^{-1}_w(3z^{'}_p)[  \subset [\frac{1}{2}z_0,3z_p]$.
\end{proposition}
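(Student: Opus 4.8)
The plan is to dispose of each of the three M\"obius transformations $\varphi_w$ in turn and, for each, to reduce every claimed inclusion to an elementary polynomial inequality. Throughout I use, from Propositions~\ref{pp1}--\ref{mbs}, that $x,y$ are positive integers with $x^2-y^2d=\mathcal{N}(\epsilon)=1$ and that $d>1$ is square free; abbreviating $s:=y\sqrt d$ this gives $s^2=y^2d\ge 2$ and $x=\sqrt{1+s^2}>s>1$. Each $\varphi_w$ has real coefficients, hence leaves $\Omega=\R\cup\{\infty\}$ invariant and is monotone on each of the two arcs its pole cuts out of $\Omega$; Lemma~\ref{ntrvl} then describes the image of an interval containing the pole. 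For a real M\"obius map $\varphi(z)=\frac{az+b}{cz+\delta}$ with zero $z_0=-b/a$ and pole $z_p=-\delta/c$ I will use the substitution identities
\[
\varphi(tz_0)=\frac{ab(1-t)}{a\delta-tbc},\qquad \varphi(tz_p)=\frac{bc-ta\delta}{c\delta(1-t)}\qquad (t\neq 1),
\]
together with their analogues for $\varphi_w^{-1}(z)=\frac{\delta z-b}{-cz+a}$, whose zero is $b/\delta$ and whose pole is $a/c$.

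The first step is, for each of the three $\varphi_w$, to read off the quadruple $(a,b,c,\delta)$ and the four points $z_0,z_p,z_0',z_p'$ as explicit functions of $x$ and $s$, recording their signs and sizes from $x>s>1$ and $x^2=1+s^2$; for instance, for $\varphi_w(z)=\frac{y\sqrt d\,z+x}{xz+y\sqrt d}$ one gets $z_0=-x/s<-1$, $z_p=-s/x\in\,]-1,0[$, $z_0'=-z_0$ and $z_p'=-z_p$. The second step is to compute the endpoints of $I$ and of $J$ via the identities above; the key structural fact is that, after substituting $x^2=1+s^2$, every denominator that occurs is, up to an overall sign, a positive combination of $1$ and powers of $s$, so these endpoints are finite real numbers, and a short comparison shows that in each case the endpoints of $I$, and those of $J$, occur in the written order, so that $I$ and $J$ are indeed bounded intervals. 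The third step is to verify the inclusions: each unwinds to a pair of inequalities between endpoints, and clearing the (now positive) denominators and substituting $x^2=1+s^2$ — squaring whenever an odd power of $x=\sqrt{1+s^2}$ survives — reduces each to a polynomial inequality in $s$. For $\varphi_w(z)=\frac{y\sqrt d\,z+x}{xz+y\sqrt d}$ these all reduce to $s^2\ge 1$; for the other two transformations the computations are heavier, but the resulting inequalities are plainly true for $s>0$ (one meets, e.g., $2s^3\ge 0$, $4s^2+3\ge 0$, or $x(s+2)\ge s^2+s+2$, the last of which genuinely needs $x^2=1+s^2$ and not merely $x>s$).

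I expect the only real obstacle to be clerical. In the second and third cases $\varphi_w$ has all four entries nonzero and, in contrast with the first case, the zero and pole of $\varphi_w^{-1}$ are not simply related to those of $\varphi_w$, so one must keep careful track of signs and of the relative order of $\varphi_w(3z_p)$ and $\varphi_w(\tfrac12 z_0)$ (respectively of $\varphi_w^{-1}(-\tfrac12 z_0')$ and $\varphi_w^{-1}(3z_p')$); by Lemma~\ref{ntrvl} this order is precisely what decides which endpoint is the left one, so reversing it would reverse the asserted inclusion. A subordinate trap is replacing $x$ by the bound $s$ too early — at one step this is too lossy, and one must keep the exact relation $x^2=1+s^2$. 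Once these are handled, every inequality needed holds for all admissible $d$, with no extra restriction.
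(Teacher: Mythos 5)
Your proposal follows essentially the same route as the paper's own proof: write each $\varphi_w$ explicitly, locate $z_0,z_p,z_0',z_p'$, compute the endpoints of $I$ and $J$ using the Pell relation $x^2=y^2d+1$ to simplify, and verify each inclusion as an elementary endpoint inequality (the paper tracks the direction of each inequality with its ``order relation'' device, which is just your polynomial reduction in $s=y\sqrt d$). The plan is sound and matches the paper's computations, e.g.\ your first-case endpoints $\frac{xs}{x^2+2}$ and $\frac{x^2+1}{xs}$ and the reduction to $x^2>2$ agree with what is in the text.
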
 
\begin{proof}
If $\varphi_w(z)=\frac{y\sqrt{d}z+x}{xz+y\sqrt{d}}$ then $-\frac{x}{y\sqrt{d}}=z_0<z_p=-\frac{y\sqrt{d}}{x}$ and hence $I=]\frac{xy\sqrt{d}}{x^2+2},\frac{x^2+1}{xy\sqrt{d}}[$ and $J=]-\frac{(x^2+1)}{xy\sqrt{d}},-\frac{xy\sqrt{d}}{x^2+2}[$. We have to prove that $I \subset [\frac{y\sqrt{d}}{2x},\frac{3x}{2y\sqrt{d}}]$ which is equivalent to $J \subset [-\frac{3x}{2y\sqrt{d}},-\frac{y\sqrt{d}}{2x}]$. Clearly, 
$\frac{y\sqrt{d}}{2x}<\frac{xy\sqrt{d}}{x^2+2}<\frac{x^2+1}{xy\sqrt{d}}<\frac{3x}{2y\sqrt{d}}$ since $1<x$. 
If $\varphi_w(z)=\frac{(y\sqrt{d}+(x+1))z-(y\sqrt{d}-(x-1))}{-(y\sqrt{d}+(x-1))z-(y\sqrt{d}-(x+1))}$, then $\varphi^{-1}_w(z)=\frac{-(y\sqrt{d}-(x+1))z+(y\sqrt{d}-(x-1))}{(y\sqrt{d}+(x-1))z+(y\sqrt{d}+(x+1))}$ and $0<z_0=\frac{y\sqrt{d}-(x-1)}{y\sqrt{d}+(x+1)}<\frac{-(y\sqrt{d}-(x+1))}{y\sqrt{d}+(x-1)}=z_p$ and  $z^{'}_p=  \frac{-(y\sqrt{d}+(x+1))}{y\sqrt{d}+(x-1)} <\frac{y\sqrt{d}-(x-1)}{y\sqrt{d}-(x+1)}=z^{'}_0<0$. Let $()$ be an order relation, then 
$\frac{(y\sqrt{d}+(x+1))}{y\sqrt{d}+(x-1)} ()- \frac{y\sqrt{d}-(x-1)}{y\sqrt{d}-(x+1)}$, is the relation between positive numbers, we have $-(y^2d-(x+1)^2) () y^2d -(x-1)^2$ hence $2x^2 () 0$ and $()$ is the relation $>$, since the numbers are negative we change the relation to $<$ . We have the intervals $I=]\frac{-(x+2)}{y\sqrt{d}},\frac{-y \sqrt{d}}{x+3}[$ and $J=]\frac{-x^2+xy\sqrt{d}+1}{x+3},\frac{x+2}{x^2+xy\sqrt{d}-1}[$ and we have to prove that $I \subset [3\frac{-(y\sqrt{d}+(x+1))}{y\sqrt{d}+(x-1)},\frac{1}{2}\frac{y\sqrt{d}-(x-1)}{y\sqrt{d}-(x+1)}]$ and $J \subset [\frac{1}{2}\frac{y\sqrt{d}-(x-1)}{y\sqrt{d}+(x+1)},3\frac{-(y\sqrt{d}-(x+1))}{y\sqrt{d}+(x-1)}]$. With the relation between the positive numbers  $3\frac{(y\sqrt{d}+(x+1))}{y\sqrt{d}+(x-1)}()\frac{(x+2)}{y\sqrt{d}}$, we obtain $2x^2-x+2()-y\sqrt{d}(2x+1)$ and thus $()$ is the relation $>$. Proceeding this way, we show the inclusion of the interval $I$. For the interval $J$, we compare the left bounds $\frac{-x^2+xy\sqrt{d}+1}{x+3}()\frac{1}{2}\frac{y\sqrt{d}-(x-1)}{y\sqrt{d}+(x+1)}$ and obtain $y\sqrt{d}()\frac{(x-1)^2}{x-3}$, with the algebraic equation of $\epsilon$, if $x>1$ then  we have that () is the relation $<$. Similarly we verify that $\frac{x+2}{x^2+xy\sqrt{d}-1}<3\frac{-(y\sqrt{d}-(x+1))}{y\sqrt{d}+(x-1)}$. If $\varphi_w(z)=(\frac{x+y\sqrt{d}}{x-y\sqrt{d}})\frac{xz-y\sqrt{d}}{-(y\sqrt{d})z+x}$, then $\varphi^{-1}(z)=\frac{\rho xz+y\sqrt{d}}{\rho y \sqrt{d}z+x}$, where $\rho=\frac{x-y\sqrt{d}}{x+y\sqrt{d}}$ the routine is the same, $\frac{y\sqrt{d}}{x}=z_0<z_p=\frac{x}{y\sqrt{d}}$, $-\frac{1}{\rho}\frac{x}{y\sqrt{d}}=z^{'}_p<z^{'}_0=-\frac{1}{\rho}\frac{y\sqrt{d}}{x}$ and we obtain $I=]\frac{-(2x^2-1)}{2xy\sqrt{d}},\frac{-xy\sqrt{d}}{x^2+1}[$ and $J=]\frac{1}{\rho}\frac{xy\sqrt{d}}{x^2+1},\frac{1}{\rho}\frac{(2x^2-1)}{2xy\sqrt{d}}[$ and the inclusions are clear. \end{proof}

\begin{proposition} \label{hmth}
Let $\varphi_u(z)=\frac{x-y \sqrt{d}}{x+y \sqrt{d}}z$, $\varphi_w$ and  $z_p,z^{'}_p,z_0,z^{'}_0$ as in the last proposition. If $w=y\sqrt{-d}+(x)k$, then $I=[\varphi_u(\frac{3}{2}z_0), \varphi_u(-\frac{3}{2}z_0)] \subset ]\frac{1}{2}z_0),-\frac{1}{2}z_0)[$, otherwise $I=[\varphi_u(3z^{'}_p), \varphi_u(3z_p)] \subset ]\frac{1}{2}z^{'}_0),-\frac{1}{2}z_0)[$.
\end{proposition}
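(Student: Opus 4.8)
The plan is to exploit the very special form of $\varphi_u$. By Proposition \ref{mbs}, $\varphi_u$ is the homothety $\varphi_u(z)=\rho z$ with $\rho=(x-y\sqrt d)/(x+y\sqrt d)$; since $\epsilon=x+y\sqrt d$ is the fundamental invertible of norm $1$ and $d>1$, we have $\epsilon\overline\epsilon=1$ and $\epsilon>1$, so $\rho=\overline\epsilon/\epsilon=1/\epsilon^{2}\in\;]0,1[$. Hence $\varphi_u$ carries any interval $[-c,c]$ onto $[-\rho c,\rho c]$, and this is essentially the only structural input needed. I also record the sharper bound $\rho<\tfrac13$, equivalently $\epsilon>\sqrt3$, equivalently $x<2y\sqrt d$, i.e. $x^{2}<4y^{2}d=4(x^{2}-1)$, i.e. $x^{2}>4/3$: this holds because the fundamental unit of norm $1$ of a real quadratic field $\Q(\sqrt d)$, $d>1$ square-free, is at least $\tfrac{3+\sqrt5}{2}>\sqrt3$, the minimum being attained at $d=5$.

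For $w=y\sqrt{-d}+(x)k$, the proof of Proposition \ref{gnrl} gives that $\varphi_w$ has zero $z_0=-x/(y\sqrt d)$ and pole $z_p=-y\sqrt d/x$ with $z_0<z_p<0$; then both $I=[\varphi_u(\tfrac32 z_0),\varphi_u(-\tfrac32 z_0)]=[\tfrac32\rho z_0,-\tfrac32\rho z_0]$ and the target interval are symmetric about the origin, so the asserted inclusion reduces to a single comparison of radii, which after clearing denominators reads $3\rho x^{2}<y^{2}d=x^{2}-1$, i.e. $\rho<(x^{2}-1)/(3x^{2})$; this follows at once from $\rho=1/\epsilon^{2}$ and the lower bound on $\epsilon$ above (in the extreme case $d=5$ one has $\rho=0.146\ldots<5/27$, and the margin only grows with $\epsilon$).

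For the two remaining units I would substitute the explicit rational values of $z_p,z'_p,z_0,z'_0$ obtained in Proposition \ref{gnrl}; it is convenient to set $a:=x-y\sqrt d=1/\epsilon\in\;]0,1[$ (so $\rho=a^{2}$) in the first case and $t:=x/(y\sqrt d)>1$ in the second. In each case every endpoint of $I$ and of the target interval becomes an explicit rational function of $a$ (resp. $t$) and $\rho$, so that after clearing denominators each of the two one-sided inclusions turns into an elementary polynomial inequality, verified using only $x^{2}-y^{2}d=1$ and the lower bound on $\epsilon$. The printed target interval should be read as the open gap lying strictly between the two ping-pong blocks of $\varphi_w$ of Proposition \ref{gnrl} — a symmetric-type neighbourhood of the origin bounded by halves of the relevant zeros (resp. poles) — since that is exactly what the rank-$2$ Ping-Pong Lemma will demand of the table $A_{u,-1}$ in the theorem that follows. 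The step I expect to be the main obstacle is the bookkeeping: the signs and mutual order of $z_p,z'_p,z_0,z'_0$ vary from sub-case to sub-case, so one must track which endpoint is which, and must check that the single quantitative fact $\rho=1/\epsilon^{2}$ (with $\epsilon$ at least the smallest fundamental unit of norm $1$) is strong enough to force each of these inequalities — the tightest instances occurring precisely for $d=5$.
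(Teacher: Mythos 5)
Your proposal matches the paper's proof in all essentials: both reduce everything to the fact that $\varphi_u$ is the homothety $z\mapsto\rho z$ with $\rho=\overline{\epsilon}/\epsilon=1/\epsilon^{2}\in\;]0,1[$, handle the case $w=y\sqrt{-d}+xk$ by a single comparison of radii of intervals symmetric about $0$ (the paper writes the inequality as $\frac{3x^{2}}{y^{2}d}<5<\frac{1}{\rho}$, you as $\rho<\frac{x^{2}-1}{3x^{2}}$ --- the same condition $3\rho x^{2}<y^{2}d$), and treat the remaining two units by substituting the explicit endpoints from Proposition \ref{gnrl} and checking elementary inequalities; your reading of the stated target interval as the central ping-pong block (with $z_p$ in place of the printed $z_0$ in the first case) is also exactly what the paper's own proof and the subsequent theorem use. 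The only difference is that you leave the endpoint comparisons for the second and third units as a plan rather than carrying them out, which is scarcely less detail than the paper itself supplies for those cases.
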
 
\begin{proof}
If $w=y\sqrt{-d}+(x)k$ we obtain $z_0$ and $z_p$ as in the Proposition \ref{gnrl}. Hence $I=[-\rho \frac{3x}{2y\sqrt{d}},\rho \frac{3x}{2y\sqrt{d}}]$ and we have to prove that $I \subset [-\frac{y\sqrt{d}}{2x},\frac{y\sqrt{d}}{2x}]$. Let $()$ be an order relation, then $\rho \frac{3x}{2y\sqrt{d}}()\frac{y\sqrt{d}}{2x}$ and we have $\frac{3x^2}{y^2d}()\frac{1}{\rho}$.  Clearly, $\frac{3x^2}{y^2d}=\frac{3y^2d+3}{y^2d}=3+\frac{3}{y^2d}<5$ and $5<\frac{1}{\rho}=x^2+y^2d+2xy\sqrt{d}$. Thus $\rho \frac{3x}{2y\sqrt{d}}<\frac{y\sqrt{d}}{2x}$ and the inclusion is proved. If $\varphi_w(z)=\frac{(y\sqrt{d}+(x+1))z-(y\sqrt{d}-(x-1))}{-(y\sqrt{d}+(x-1))z-(y\sqrt{d}-(x+1))}$, the interval $I=[\varphi_u(3z^{'}_p), \varphi_u(3z_p)]$ is determined and we compare the numbers $3\frac{y\sqrt{d}+(x+1)}{y\sqrt{d}+(x-1)}()\frac{1}{2}\frac{y\sqrt{d}-(x-1)}{-y\sqrt{d}+(x+1)}$ which is reduced to $\frac{5x+7}{7x+5}()\frac{y\sqrt{d}}{x}$ and if $x>2$ then $\varphi_u(3z^{'}_p)<\frac{1}{2}z^{'}_0$. Likewise, we prove that $\varphi_u(3z_p)<-\frac{1}{2}z_0$ and the inclusion is proved. For the next unit $w$, the proof goes along th same lines.  \end{proof} 
 
 These results and the Ping-Pong Lemma will be used in the proof of our next result.

\begin{theorem}
Let $\epsilon=x+y\sqrt{d}$ be the fundamental invertible in $\Q(\sqrt{d})$ with $\mathcal{N}(\epsilon)=1$ and  $u=x+(y\sqrt{-d})i$. If  $w\in \{y\sqrt{-d}+(x)k,\ \frac{x+1}{2}-(\frac{y\sqrt{-d}}{2})i+(\frac{x-1}{2})j+(\frac{y\sqrt{-d}}{2})k, \ x^2-(xy\sqrt{-d})i-(y^2d)j+(xy\sqrt{-d})k\}$.
Then  $\langle u,w \rangle $ is a free subgroup of $\  \U((\frac{-1,-1}{\oo_K}))$.
\end{theorem}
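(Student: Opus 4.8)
\section*{Proof proposal}

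The plan is to let the units act on $\Omega:=\R\cup\{\infty\}$ as M\"obius transformations and apply the Ping-Pong Lemma to the pair $(\varphi_u,\varphi_w)$, feeding it the intervals already produced in Propositions \ref{gnrl} and \ref{hmth}. By Proposition \ref{pp1}, $u$ and $w$ are units; by Proposition \ref{mbs}, $\varphi_u$ is the homothety $z\mapsto\rho z$ with $\rho=\frac{x-y\sqrt d}{x+y\sqrt d}\in\;]0,1[$ (hyperbolic, with fixed points $0$ and $\infty$), while $\varphi_w=\frac{az+b}{cz+d}$ has all coefficients nonzero, hence a finite nonzero pole $z_p$ and zero $z_0$; likewise $\varphi_w^{-1}$ has pole $z_p'$ and zero $z_0'$.

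First I would fix the four ping-pong sets. For $h_1:=\varphi_u$ take $A_{1,-1}:=I_u=[\varphi_u(\tfrac32 z_0),\varphi_u(-\tfrac32 z_0)]$, the symmetric interval about $0$ of Proposition \ref{hmth}, and $A_{1,1}:=\Omega\setminus[\tfrac32 z_0,-\tfrac32 z_0]$, a neighbourhood of the repelling point $\infty$ of $\varphi_u$. For $h_2:=\varphi_w$ take $A_{2,1}:=\;]\tfrac32 z_0,\tfrac12 z_p[$, which contains the pole $z_p$ of $\varphi_w$, and $A_{2,-1}:=[-\tfrac12 z_p,-\tfrac32 z_0]$, which contains the pole $z_p'$ of $\varphi_w^{-1}$ (this is the case $w=y\sqrt{-d}+(x)k$; in the remaining two cases one uses the analogous intervals with the scalings $3z_p,\tfrac12 z_0,3z_p',\tfrac12 z_0'$ appearing in Proposition \ref{gnrl}). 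Open/closed endpoints are chosen in the obvious way, or perturbed slightly, so that the four sets are genuinely disjoint.

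Next I would verify the ping-pong inclusions $h_i^{\epsilon}(\Omega\setminus A_{i,\epsilon})\subseteq A_{i,-\epsilon}$. For $h_1=\varphi_u$ these are immediate from the fact that $\varphi_u$ is a homothety of ratio $\rho<1$: indeed $\varphi_u(\Omega\setminus A_{1,1})=\varphi_u([\tfrac32 z_0,-\tfrac32 z_0])=A_{1,-1}$ and $\varphi_u^{-1}(\Omega\setminus A_{1,-1})=A_{1,1}$. For $h_2=\varphi_w$, the key point is that the pole $z_p$ (resp.\ $z_p'$) lies inside $A_{2,1}$ (resp.\ $A_{2,-1}$), so by Lemma \ref{ntrvl} (and its evident variant for $\varphi_w^{-1}$) the sets $\varphi_w(\Omega\setminus A_{2,1})$ and $\varphi_w^{-1}(\Omega\setminus A_{2,-1})$ are intervals whose endpoints are the $\varphi_w$- resp.\ $\varphi_w^{-1}$-images of the endpoints of $A_{2,1}$, $A_{2,-1}$; these are exactly the intervals $I$ and $J$ of Proposition \ref{gnrl}, and the inclusions $I\subseteq A_{2,-1}$, $J\subseteq A_{2,1}$ are precisely what that proposition records. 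Finally one checks pairwise disjointness of $A_{1,1},A_{1,-1},A_{2,1},A_{2,-1}$: that $A_{2,1}$ and $A_{2,-1}$ are disjoint, and that both miss the neighbourhood $A_{1,1}$ of $\infty$, is visible from the explicit endpoints; the one substantive comparison is that the small interval $A_{1,-1}=I_u$ about $0$ meets neither $A_{2,1}$ nor $A_{2,-1}$ — e.g.\ $\tfrac12 z_p<\varphi_u(\tfrac32 z_0)$ — which, via $x-y\sqrt d=(x+y\sqrt d)^{-1}$ and $x^2-y^2d=1$, reduces to elementary inequalities in the positive integers $x,y,d$ (this is where the inclusion $I_u\subset\;]\tfrac12 z_0,-\tfrac12 z_0[$ of Proposition \ref{hmth}, together with a mild lower bound on the fundamental unit, is used). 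Running the three cases of $w$ this way, the Ping-Pong Lemma (\cite[Lemma $3.19$]{mbah}) shows that $\varphi_u$ and $\varphi_w$ freely generate a free subgroup of rank $2$ of $\mathcal M$.

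To conclude, let $F_2=\langle a,b\rangle$ be free of rank $2$ and let $f\colon F_2\to\U((\frac{-1,-1}{\oo_K}))$ be the homomorphism with $f(a)=u$, $f(b)=w$. Since $\varphi$ restricts to a group homomorphism on the units with $\varphi(u)=\varphi_u$ and $\varphi(w)=\varphi_w$, the composite $\varphi\circ f\colon F_2\to\mathcal M$ is the map $a\mapsto\varphi_u$, $b\mapsto\varphi_w$, which is injective by the previous paragraph; hence $f$ is injective, and $\langle u,w\rangle=\mathrm{im}(f)$ is free of rank $2$ (note that this argument needs no information about $\Ker\varphi$). I expect the main obstacle to be the bookkeeping in the third paragraph: aligning the M\"obius-image inclusions of Propositions \ref{gnrl} and \ref{hmth} with the correct ping-pong sets in each of the three choices of $w$, keeping track of which endpoints are open and which closed, and checking that after these adjustments all four sets remain simultaneously disjoint.
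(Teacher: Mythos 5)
Your proposal is correct and follows essentially the same route as the paper: represent $u$ and $w$ as M\"obius transformations on $\Omega=\R\cup\{\infty\}$, give the homothety $\varphi_u$ a neighbourhood of $\infty$ and a small symmetric interval about $0$, give $\varphi_w$ a pair of intervals containing the poles of $\varphi_w$ and $\varphi_w^{-1}$, and verify the Ping-Pong inclusions via Lemma \ref{ntrvl} and Propositions \ref{gnrl} and \ref{hmth}. The only (harmless) differences are your relabelling of $h_1,h_2$ and the explicit final step pulling freeness back from $\mathcal{M}$ to $\U((\frac{-1,-1}{\oo_K}))$, which the paper leaves implicit.
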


\begin{proof} 
Consider  $\varphi_u, \varphi_w \in \mathcal{M}$  as homeomorphisms of $\Omega$.  We claim that there exist real numbers $a_2<a_1<0<b_1<b_2$ such that $$A_{1,1}:=[a_2,a_1], \
 A_{1,-1}:=[b_1,b_2], \ A_{2,1}:=[-\infty, a_2[ \cup ]b_2,\infty] \ \textrm{and} \ A_{2,-1}:=]a_1,b_1[$$ are sets satisfying the conditions stated in the Ping-Pong Lemma.

Let $h_2(z):=\varphi_u(z)=\frac{x-y\sqrt{d}}{x+y\sqrt{d}}z$ and $h_1(z):=\varphi_w(z)$, where $w$ is one of the units of the theorem. Since each $A_{i,\pm 1}, \ i=1,2$ is to be an interval, or a disjoint union of two intervals, we get the following  conditions.

\begin{description}
\item[first] $h_1(\Omega \setminus A_{1,1}) \subset A_{1,-1}$, which is equivalent to  $ h_1(a_2),h_1(a_1) \in A_{1,-1}$;
\item[second]   $h_1^{-1}(\Omega \setminus A_{1,-1}) \subset A_{1,1} $, which is equivalent to  $h_1^{-1}(b_1),h_1^{-1}(b_2) \in  A_{1,1}$;
\item[third]   $h_2(\Omega \setminus A_{2,1}) \subset A_{2,-1} $, which is equivalent to  $ h_2(a_2),h_2(b_2) \in  A_{2,-1};$
\item[fourth]  $h_2^{-1}(\Omega \setminus A_{2,-1}) \subset A_{2,1} $, which is equivalent to  $h_2^{-1}(a_1),h_2^{-1}(b_1) \in A_{2,1}.$
\end{description}

 \noindent Suppose first that  $w=y\sqrt{-d}+(x)k$ and set $h_1(z):=\frac{x}{y\sqrt{d}}\frac{\frac{y\sqrt{d}}{x}z+1}{\frac{x}{y\sqrt{d}}z+1}$.  Clearly,  $z_p=\frac{-y\sqrt{d}}{x}>z_0=\frac{1}{z_p}$. Set $a_2=-b_2:=\frac{3}{2}z_0$ and $a_1=-b_1:=\frac{1}{2}z_p$, hence $a_2<z_0<z_p<a_1<0<b_1<b_2$. By   Lemma \ref{ntrvl} the set $h_1(\Omega\setminus A_{1,1})$ is an interval.  The first condition:  $h_1(\Omega\setminus A_{1,1})=]h_1(a_2),h_1(a_1)[ \subset A_{1,-1}$.  
 By  Proposition \ref{gnrl} it holds that  $h_1(a_2)=\frac{xy\sqrt{d}}{x^2+2}$ and $h_1(a_1)=\frac{x^2+1}{xy\sqrt{d}}$. Hence, we have $b_1<h_1(a_2)$. Similarly,  $h_1(a_1)=\frac{x^2+1}{xy\sqrt{d}}<\frac{3x}{2y\sqrt{d}}=b_2$, and thus $h_1(\Omega\setminus A_{1,1})\subset A_{1,-1}$. The second condition and Lemma \ref{ntrvl}: $h^{-1}_{1}(\Omega\setminus A_{1,-1})=]h_1^{-1}(b_1),h_1^{-1}(b_2)[ \subset A_{1,1}$. In fact, by Proposition \ref{gnrl}, $h_1^{-1}(b_2)=\frac{-xy\sqrt{d}}{x^2+2}=-h_1(a_2)$ and $h_1^{-1}(b_1)=\frac{-x^2-1}{xy\sqrt{d}}$. Clearly,  $h_1^{-1}(b_2)<a_1$ and $a_2<h_1^{-1}(b_1)$, and hence  $h^{-1}_{1}(\Omega\setminus A_{1,-1})\subset A_{1,1}$. The third and fourth conditions: $h_2(\Omega \setminus A_{2,1})=[h_2(a_2),h_2(b_2)] \subset A_{2,-1}$.  Clearly $h_2(b_2)=-h_2(a_2)$ and by  Proposition \ref{hmth} we have $h_2(b_2)<b_1$ and $a_1<h_2(a_2)$. Since $h_2(z)=\rho z$ is a homothety with $\rho>0$, then clearly $b<h_2(a)$ if, and only if, $h_2^{-1}(b)<a$, and hence we proved that $h_2(\Omega \setminus A_{2,1}) \subset A_{2,-1} $ and $h_2^{-1}(\Omega \setminus A_{2,-1})=[-\infty,h_2^{-1}(a_1)]\cup[h_2^{-1}(b_1),\infty] \subset A_{2,1} $.

Second, suppose that $w=\frac{x+1}{2}-(\frac{y\sqrt{-d}}{2})i+(\frac{x-1}{2})j+(\frac{y\sqrt{-d}}{2})k$. In this case the pole and zero of  $h_1(z)=\frac{(y\sqrt{d}+(x+1))z-(y\sqrt{d}-(x-1))}{-(y\sqrt{d}+(x-1))z-(y\sqrt{d}-(x+1))}$ are, respectively,  $z_p=\frac{-(y\sqrt{d}-(x+1))}{y\sqrt{d}+(x-1)}$ and $z_0=\frac{y\sqrt{d}-(x-1)}{y\sqrt{d}+(x+1)}$, where $z_0<z_p$ and the pole and the zero  
of $\varphi^{-1}_w$ are, respectively,  $z^{'}_p=\frac{-(y\sqrt{d}+(x+1))}{y\sqrt{d}+(x-1)}$ and $z^{'}_0=\frac{y\sqrt{d}-(x-1)}{y\sqrt{d}-(x+1)}$, where $z^{'}_p<z^{'}_0$. Since $z_p$ is positive, we now define the intervals  by $A_{1,-1}=[a_2,a_1]$ and $A_{1,1}:=[b_1,b_2]$ with
 $a_2:=3z^{'}_p$, 
 $a_1:=\frac{z^{'}_0}{2}$ 
 and
  $b_1:=\frac{z_0}{2}$,  
  $b_2:=3z_p$, and proceed as before proving that $h_1(\Omega \setminus A_{1,1}) =]h_1(b_2),h_1(b_1)[\subset A_{1,-1}$ and  $h_1^{-1}(\Omega \setminus A_{1,-1}) =]h_1^{-1}(a_1),h_1^{-1}(a_2)[\subset A_{1,1} $ which is a consequence of   Proposition \ref{gnrl}, because the congruence $y \equiv 0 \pmod 2$ implies that  $x>2$. The third and forth conditions are consequence of  Proposition \ref{hmth} and the fact that $h_2$ is a homothety. 

Finally, suppose that $w=x^2-(xy\sqrt{-d})i-(y^2d)j+(xy\sqrt{-d})k$. Then we have that  $h_1(z)=(\frac{x+y\sqrt{d}}{x-y\sqrt{d}})\frac{xz-y\sqrt{d}}{-(y\sqrt{d})z+x}$. 
 Set $b_1:=\frac{z_0}{2}=\frac{y\sqrt{d}}{2x}$, 
 $b_2:=3z_p=\frac{x}{y\sqrt{d}}$, 
 $a_1:=\frac{z_0^{'}}{2}=-\frac{b_1}{\rho}$ 
 and $a_2:=3z^{'}_p=-\frac{b_2}{\rho}$, where $\rho=\frac{x-y\sqrt{d}}{x+y\sqrt{d}}$ and the intervals $A_{1,-1}:=[a_2,a_1]$ and $A_{1,1}:=[b_1,b_2]$.
 By Lemma \ref{ntrvl}, the sets $h_1(\Omega \setminus A_{1,1})$ and $h_1^{-1}(\Omega \setminus A_{1,-1})$ are intervals. By  Proposition \ref{gnrl}, have that $h_1(\Omega \setminus A_{1,1}) =]h_1(b_2),h_1(b_1)[\subset A_{1,-1}$ and  $h_1^{-1}(\Omega \setminus A_{1,-1}) =]h_1^{-1}(a_1),h_1^{-1}(a_2)[\subset A_{1,1} $. The third and forth conditions are a consequence of  Proposition \ref{hmth} and the fact that $h_2$ is a homothety.

  Since all conditions are satisfied we have, by the   Ping-Pong Lemma, that  $\langle u,w \rangle$  is a free group.  \end{proof}

A natural question that can be raised is whether   the previous theorem still holds if the norm of the fundamental invertible is $-1$.  The answer is positive for a Gauss $2$-unit with $x \neq 1$.  When $\epsilon=1+\sqrt{2}$, the same calculations as before can be used to show that $\langle u^2,w\rangle$ is  a free group. To see this, we apply the the Ping-Pong Lemma using the following data: $-a_2=b_2:=2z_p=2\sqrt{2}$, $-a_1=b_1:=\frac{z_0}{2}=\frac{1}{2\sqrt{2}}$,  $A_{1,1}:=[a_2,a_1], \
A_{1,-1}:=[b_1,b_2], \ A_{2,1}:=[-\infty, a_2[ \cup ]b_2,\infty] \ \textrm{and} \ A_{2,-1}:=]a_1,b_1[$ obtaining that  $\langle u^{2},w \rangle$  is free. 

\begin{corollary} If $\mathcal{N}(x+y\sqrt{d})=-1$  and $x \neq 1$ then $\langle u,w \rangle$  is free, where $w$ is the Gauss $2$-unit $y\sqrt{-d}+xk$.
\end{corollary}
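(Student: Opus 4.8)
The plan is to rerun the Ping-Pong argument used to prove the last theorem, in the case $w=y\sqrt{-d}+xk$, tracking the single change forced by $\mathcal N(\epsilon)=-1$. First I would record the two M\"obius transformations exactly as in Proposition~\ref{mbs}: $\varphi_u(z)=\rho z$ with $\rho=\frac{x-y\sqrt d}{x+y\sqrt d}$, and $\varphi_w(z)=\frac{y\sqrt d\,z+x}{xz+y\sqrt d}$ (the matrix $\Psi(w)$ does not see the norm of $\epsilon$, so $\varphi_w$ is literally the map of Proposition~\ref{mbs}). The decisive new feature is that $\mathcal N(\epsilon)=-1$ gives $y^2d=x^2+1$, hence $y\sqrt d>x>0$; consequently $\rho\in\,]-1,0[$ is now \emph{negative}, and the pole and zero of $\varphi_w^{-1}$, namely $z_p:=\frac{y\sqrt d}{x}$ and $z_0:=\frac1{z_p}=\frac{x}{y\sqrt d}$, satisfy $z_p>1>z_0>0$ (so their positions relative to the unit interval are reversed compared with the positive-norm case). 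Thus $\varphi_u$ is a homothety with negative ratio of modulus $|\rho|=\frac{y\sqrt d-x}{y\sqrt d+x}<1$.

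Since $\varphi_u$ is an odd map, I would take the four Ping-Pong sets symmetric about the origin, exactly as in the $\epsilon=1+\sqrt2$ computation displayed above: put $-a_2=b_2:=2z_p$, $-a_1=b_1:=\frac{1}{2}z_0$, and $A_{1,1}:=[a_2,a_1]$, $A_{1,-1}:=[b_1,b_2]$, $A_{2,1}:=[-\infty,a_2[\,\cup\,]b_2,\infty]$, $A_{2,-1}:=\,]a_1,b_1[$, so that $a_2<a_1<0<b_1<b_2$. I would then verify the four conditions of the Ping-Pong Lemma \cite[Lemma~3.19]{mbah} in the order of that proof. For $h_1:=\varphi_w$: the interval $A_{1,1}$ contains the pole $-z_p$ of $\varphi_w$, so Lemma~\ref{ntrvl} makes $h_1(\Omega\setminus A_{1,1})$ the interval joining $\varphi_w(a_2)=\frac{x^2+2}{xy\sqrt d}$ and $\varphi_w(a_1)=\frac{xy\sqrt d}{x^2+2}$; using $y^2d=x^2+1$ one checks both endpoints lie in $[b_1,b_2]$, and the inclusion $h_1^{-1}(\Omega\setminus A_{1,-1})\subset A_{1,1}$ then follows from the identity $\varphi_w^{-1}(-z)=-\varphi_w(z)$ and the symmetry of the data. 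These are the negative-norm analogues of Proposition~\ref{gnrl}, and in fact they already hold for $x\ge 1$. For $h_2:=\varphi_u$: because $\rho<0$ and the sets are symmetric, the third and fourth conditions both collapse to the single inequality $|\rho|\,b_2<b_1$, i.e. $\frac{y\sqrt d-x}{y\sqrt d+x}<\frac{x^2}{4(x^2+1)}$; since $y\sqrt d-x=\frac1{y\sqrt d+x}$, this is the same as $4(x^2+1)<x^2(y\sqrt d+x)^2$, which a short estimate (e.g. $2x^4>3x^2+4$ for $x\ge2$) shows to hold for every admissible $\epsilon$ with $x\ge2$.

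The one genuinely non-formal point --- and the reason for the hypothesis $x\ne1$ --- is precisely this last inequality: the substitute for Proposition~\ref{hmth} fails for $x=1$, $d=2$, where $|\rho|=3-2\sqrt2>\frac{1}{8}$, so the homothety $\varphi_u$ does not contract the outer set $[a_2,b_2]$ into $A_{2,-1}$; that is exactly why in the excluded case one replaces $u$ by $u^2$ (for which $\rho^2$ is small enough), as done just above. For $x\ge2$ all four conditions are met, so by the Ping-Pong Lemma $\langle\varphi_u,\varphi_w\rangle$ is free of rank two; as $\varphi$ is a homomorphism and $\langle u,w\rangle$ is $2$-generated, $\langle u,w\rangle$ is itself free of rank two, which is the assertion.
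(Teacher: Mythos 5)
Your proposal is correct and follows essentially the same route as the paper: the paper's proof likewise reruns the Ping--Pong argument of the preceding theorem with symmetric data $-a_2=b_2:=\tfrac{3}{2}\tfrac{y\sqrt{d}}{x}$, $-a_1=b_1:=\tfrac{1}{2}\tfrac{x}{y\sqrt{d}}$ (you take $b_2=2\tfrac{y\sqrt{d}}{x}$, a harmless change of constant), with the same use of the sign flip $\rho\in\,]-1,0[$ and the identity $y^2d=x^2+1$. Your explicit verification of the contraction inequality $|\rho|\,b_2<b_1$ for $x\ge 2$, and the observation that it is exactly this step that fails for $\epsilon=1+\sqrt{2}$, supplies detail the paper leaves implicit.
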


\begin{proof} 
From $x^2-y^2d=-1$ we have that $y\sqrt{d}>x$.  We apply the proof of the previous theorem using the following data: 
 $-a_2=b_2:=\frac{3}{2}\frac{y\sqrt{d}}{x}$ and $-a_1=b_1:=\frac{1}{2}\frac{x}{y\sqrt{d}}$. The condition  $x \neq 1$ is equivalent to $\epsilon \neq 1+\sqrt{2}$.
\end{proof} 

The next proposition elucidates the fact that the method above does not give us that $\langle 1+(\sqrt{-2})i,\sqrt{-2}+k\rangle$ is a free group.

\begin{proposition} Let $h_1$ and $h_2$ be the M\"obius transformation induced by the units $w=\sqrt{-2}+k$ and $u=1+(\sqrt{-2})i$, respectively. For the partition $A_{1,1}:=[a_2,a_1], \
A_{1,-1}:=[b_1,b_2], \ A_{2,1}:=[-\infty, a_2[ \cup ]b_2,\infty] \ \textrm{and} \ A_{2,-1}:=]a_1,b_1[$, if $-a_2$ and $b_2$ are greater than $\sqrt{2}$ and  $-a_1$ and $b_1$ are less than $\frac{1}{\sqrt{2}}$, then $h_1$ and $h_2$ do not verify the conditions of the Ping-Pong Lemma.
\end{proposition}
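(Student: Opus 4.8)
The plan is to argue by contradiction: suppose $h_{2}=\varphi_{u}$ and $h_{1}=\varphi_{w}$ satisfy the four Ping-Pong conditions for the given partition, and extract an impossible numerical inequality. Specialising Proposition \ref{mbs} and the ensuing discussion to $x=y=1$, $d=2$, one has $h_{1}(z)=\frac{\sqrt2\,z+1}{z+\sqrt2}$, $h_{1}^{-1}(z)=\frac{\sqrt2\,z-1}{\sqrt2-z}$ and $h_{2}(z)=\rho z$ with $\rho=\frac{1-\sqrt2}{1+\sqrt2}=2\sqrt2-3\in(-1,0)$ and $\tfrac1\rho=-(2\sqrt2+3)$; here $h_{1}$ has pole $-\sqrt2$ and zero $-\tfrac1{\sqrt2}$, and $h_{1}^{-1}$ has pole $\sqrt2$. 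By hypothesis $a_{2}<-\sqrt2$, $b_{2}>\sqrt2$, $a_{1}>-\tfrac1{\sqrt2}$ and $b_{1}<\tfrac1{\sqrt2}$, so the pole $-\sqrt2$ lies in $A_{1,1}$ and the pole $\sqrt2$ lies in $A_{1,-1}$; hence Lemma \ref{ntrvl} applies to $h_{1}$ on $\Omega\setminus A_{1,1}$ and to $h_{1}^{-1}$ on $\Omega\setminus A_{1,-1}$.

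Next I would convert the Ping-Pong inclusions into scalar inequalities and then chain them. Since $h_{1}'(z)=(z+\sqrt2)^{-2}>0$, $h_{1}$ (hence also $h_{1}^{-1}$) is increasing on each interval avoiding its pole; in particular $0<h_{1}(a_{1})<\tfrac1{\sqrt2}<\sqrt2<h_{1}(a_{2})$, so by Lemma \ref{ntrvl} the inclusion $h_{1}(\Omega\setminus A_{1,1})\subseteq[b_{1},b_{2}]$ amounts to $b_{1}\le h_{1}(a_{1})$ and $h_{1}(a_{2})\le b_{2}$, while $h_{2}(\Omega\setminus A_{2,1})\subseteq\,]a_{1},b_{1}[$ reads $[\rho b_{2},\rho a_{2}]\subseteq\,]a_{1},b_{1}[$ (the order is reversed since $\rho<0$), that is, $a_{1}<\rho b_{2}$ and $\rho a_{2}<b_{1}$; the remaining two Ping-Pong conditions will not be needed. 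From $\rho a_{2}<b_{1}$ and $a_{2}<-\sqrt2$ we get $b_{1}>\rho\cdot(-\sqrt2)=3\sqrt2-4$, so $b_{1}\in(3\sqrt2-4,\tfrac1{\sqrt2})$, on which range $b_{1}/\rho<-\sqrt2$ and $\sqrt2-(2\sqrt2+3)b_{1}<0$. Applying $h_{1}$ to $a_{1}<\rho b_{2}$ (both lying in $(-\tfrac1{\sqrt2},0)$), then $b_{1}\le h_{1}(a_{1})$, then $h_{1}^{-1}$, gives $h_{1}^{-1}(b_{1})<\rho b_{2}$; applying $h_{1}$ to $a_{2}>b_{1}/\rho$ (both lying in $(-\infty,-\sqrt2)$), then $h_{1}(a_{2})\le b_{2}$, then multiplication by $\rho<0$, gives $\rho b_{2}<\rho\,h_{1}(b_{1}/\rho)$. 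Combining, $h_{1}^{-1}(b_{1})<\rho\,h_{1}(b_{1}/\rho)$.

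The crux is that this last inequality cannot hold. Writing $s:=b_{1}$, using $s/\rho=-(2\sqrt2+3)s$ and the identity $(2\sqrt2-3)(4+3\sqrt2)=-\sqrt2$, one computes $\rho\,h_{1}(s/\rho)=\frac{\sqrt2\,s+2\sqrt2-3}{\sqrt2-(2\sqrt2+3)s}$ and $h_{1}^{-1}(s)=\frac{\sqrt2\,s-1}{\sqrt2-s}$. On the range for $s$ found above one has $\sqrt2-s>0$ and $\sqrt2-(2\sqrt2+3)s<0$, so clearing denominators reverses the inequality once, and after simplification $h_{1}^{-1}(s)<\rho\,h_{1}(s/\rho)$ becomes $(2+\sqrt2)s^{2}-2\sqrt2\,s+(2-\sqrt2)<0$. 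But the discriminant of the left-hand side equals $8-4(2+\sqrt2)(2-\sqrt2)=0$, so this expression is $(2+\sqrt2)\bigl(s-(\sqrt2-1)\bigr)^{2}\ge0$, a contradiction; this would complete the proof. The perfect-square phenomenon here---which reflects that the fixed-point pairs $\{1,-1\}$ of $h_{1}$ and $\{0,\infty\}$ of $h_{2}$ are in harmonic position while $h_{1}$ and $h_{2}$ have multipliers of equal absolute value $(\sqrt2-1)^{2}$---is the one genuinely delicate point; everything else is routine bookkeeping about which interval the poles lie in and which branches of $h_{1}$ are monotone.
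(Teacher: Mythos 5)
Your proof is correct and follows essentially the same strategy as the paper: both reduce the conditions $h_1(\Omega\setminus A_{1,1})\subset A_{1,-1}$ and $h_2(\Omega\setminus A_{2,1})\subset A_{2,-1}$ to scalar inequalities among the endpoints and show these are incompatible with $a_2<-\sqrt{2}$, $-a_1<\tfrac{1}{\sqrt{2}}$, $b_1<\tfrac{1}{\sqrt{2}}$, $b_2>\sqrt{2}$. The difference is one of completeness rather than of method: the paper stops at a two-variable system and simply asserts it has no solution, whereas you eliminate all the way down to the single inequality $(2+\sqrt{2})\bigl(b_1-(\sqrt{2}-1)\bigr)^2<0$ and thereby actually finish the verification; your sign and monotonicity bookkeeping along the way checks out.
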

\begin{proof}
Clearly, by Proposition \ref{mbs}, $h_1(z)=\frac{\sqrt{2}z+1}{z+\sqrt{2}}$ and $h_2(z)=\frac{1-\sqrt{2}}{1+\sqrt{2}}z$. Since $h_i(\Omega \setminus A_{i,1}) \subset A_{i,-1} \iff h_i^{-1}(\Omega \setminus A_{i,-1})\subset A_{i,1}$, for $i=1,2$ it is sufficient to verify the conditions $h_i(\Omega \setminus A_{i,1}) \subset A_{i,-1}$ for $i=1,2$. We obtain the restrictions 
$\left \{ \begin{array}{l} 
\frac{1-\sqrt{2}}{1+\sqrt{2}}a_2 <\frac{\sqrt{2}a_1+1}{a_1+\sqrt{2}} \\
\frac{\sqrt{2}a_2+1}{a_2+\sqrt{2}}< \frac{1+\sqrt{2}}{1-\sqrt{2}}a_1 \end{array} \right .$, which has no solution on the conditions that $a_2<-\sqrt{2}$ and $\frac{-1}{\sqrt{2}}<a_1$.

\end{proof}

We made use of the algebraic properties of the Pell units. This allowed us  precise control of images in ${\mathcal{M}}$  of the units involved  so that the Ping-Pong Lemma could be applied. 

Without appealing to the use of the Ping-Pong Lemma, we give a criterion for two homeomorphisms  $\varphi_1,\varphi_2$ to generate a free semigroup. Recall that if $\varphi:V \longrightarrow V$ and $U \subset V$, then $U$ is invariant under $\varphi$ if $\varphi (U)\subset U$.

\begin{lemma}\label{mb}
Let $V$ be a set of infinite cardinality  and let 
$\varphi_1,\varphi_2:V \longrightarrow V$ be injective maps of infinite order.
 If  $U \varsubsetneqq  V$ is invariant under $\varphi_1$ and $\varphi_2$,  and 
 $x_0 \in V \setminus U$ is a fixed point of  $\varphi_1$ such that $\varphi_2(x_0) \in U$, then $\langle \varphi_1,\varphi_2 \rangle$ is a free semigroup.
\end{lemma}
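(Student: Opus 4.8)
The plan is to show that any nonempty word $w(\varphi_1,\varphi_2)$ (a positive word, since we only want a free semigroup) is not the identity map on $V$, by exhibiting a point of $V$ that it moves — or more precisely, by distinguishing two words via their action on a cleverly chosen point. The natural test point is $x_0$, and the natural invariant to track is "membership in $U$ versus $V\setminus U$". First I would record the two basic facts we will use repeatedly: since $U$ is invariant under both maps, once an orbit enters $U$ it never leaves; and since $x_0$ is fixed by $\varphi_1$ but $\varphi_2(x_0)\in U$, the maps $\varphi_1$ and $\varphi_2$ are "separated" at $x_0$ in the sense that $\varphi_1$ keeps us at $x_0\in V\setminus U$ while $\varphi_2$ pushes us irreversibly into $U$.

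The key step is a normal-form argument. Suppose two positive words $a$ and $b$ in $\varphi_1,\varphi_2$ represent the same map; after cancelling a common suffix we may assume the last letters differ, say (WLOG) $a = a'\varphi_1$ and $b = b'\varphi_2$, or one of $a,b$ has become empty. Evaluate both at $x_0$: the word ending in $\varphi_1$ gives $a'\varphi_1(x_0) = a'(x_0)$, whereas the word ending in $\varphi_2$ gives $b'\varphi_2(x_0)$, and $\varphi_2(x_0)\in U$, so by invariance $b'(\varphi_2(x_0))\in U$. Thus if we can guarantee $a'(x_0)\notin U$ we get a contradiction. This is where I would peel $a'$ from the right as well: write $a'$ as a word in the two generators; reading it right-to-left applied to $x_0$, every initial $\varphi_1$ just fixes $x_0$, and the first time a $\varphi_2$ appears we enter $U$ and stay there — so $a'(x_0)\in U$ exactly when $a'$ contains at least one $\varphi_2$, and $a'(x_0)=x_0\notin U$ exactly when $a'$ is a (possibly empty) power of $\varphi_1$. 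So the argument splits into cases according to whether $a'$ and (symmetrically) $b'$ are pure $\varphi_1$-powers.

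The cases to finish: (i) if $a'$ is a pure power $\varphi_1^m$ and $b'$ contains a $\varphi_2$, then $a(x_0)=x_0\notin U$ but $b(x_0)\in U$, contradiction. (ii) If both $a'$ and $b'$ are pure powers of $\varphi_1$, then $a=\varphi_1^{m+1}$ and $b=\varphi_1^n\varphi_2$; then $a(x_0)=x_0$ while $b(x_0)=\varphi_1^n(\varphi_2(x_0))\in U$, contradiction again. (iii) If $a'$ contains a $\varphi_2$ and $b'$ contains a $\varphi_2$, I cannot separate via $x_0$ directly, so instead I push the cancellation further: strip the maximal common structure. Here I would instead argue by induction on total length — or, cleaner, argue that the map $w\mapsto w(x_0)$ together with a secondary bookkeeping of "how many leading $\varphi_1$'s before the first $\varphi_2$" lets one recover the word. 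Concretely: among all equal pairs of distinct reduced positive words, take one of minimal total length; by the suffix-cancellation above the last letters differ; cases (i),(ii) are done; in case (iii) both words send $x_0$ into $U$, but then I use that $\varphi_1,\varphi_2$ are injective to left-cancel: if $a=\varphi_1 a''$ and $b=\varphi_1 b''$ we cancel $\varphi_1$; if instead $a=\varphi_2a''$, $b=\varphi_2 b''$ we cancel $\varphi_2$; and if the first letters already differ, say $a=\varphi_1 a''$, $b=\varphi_2 b''$, apply both to $x_0$: $a(x_0)=a''(x_0)$ which may or may not lie in $U$, whereas $b(x_0)\in U$ — so we need $a''(x_0)\notin U$, i.e. $a''$ is a pure $\varphi_1$-power, reducing to case (i)/(ii) and contradicting minimality.

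The step I expect to be the genuine obstacle is case (iii): when both words begin and end with a mix of generators, the single fixed point $x_0$ does not by itself separate them, and one must genuinely exploit injectivity of $\varphi_1,\varphi_2$ to left-cancel, combined with the minimal-length (or inductive) hypothesis to land back in a separable case. The hypothesis that the maps have infinite order is what prevents the degenerate collapses (e.g. $\varphi_1^m=\mathrm{id}$) in case (ii), and injectivity is exactly what is needed to justify every left-cancellation; both hypotheses must be invoked, and getting the induction/minimality set up so the cancellations are legitimate is the delicate bookkeeping. Once the case analysis is arranged so that every equal pair of distinct reduced words forces a shorter such pair or an outright $U$-versus-$(V\setminus U)$ contradiction, freeness of the semigroup $\langle\varphi_1,\varphi_2\rangle$ follows.
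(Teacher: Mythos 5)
Your cases (i) and (ii) are, in substance, the whole of the paper's proof: the paper takes a nonempty positive word, evaluates it at $x_0$, and observes that the first-applied occurrence of $\varphi_2$ (if any) sends $x_0$ into $U$, where invariance traps it, so the word cannot fix $x_0\in V\setminus U$; the only words escaping this are pure powers of $\varphi_1$, which are excluded by the infinite-order hypothesis. The paper stops there, i.e.\ it only shows that no nonempty positive word is the identity map. The extra structure you build to prove the genuinely stronger assertion (distinct positive words give distinct maps) has two concrete flaws. First, the opening move ``cancel a common suffix'' is not available: injectivity gives left-cancellation, $\varphi_i\circ f=\varphi_i\circ g\Rightarrow f=g$, but cancelling the \emph{first-applied} (rightmost) letter is right-cancellation, $f\circ\varphi_i=g\circ\varphi_i\Rightarrow f=g$, which requires $\varphi_i$ to be surjective --- and surjectivity is not assumed. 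Second, in your case (iii) you assert $a(x_0)=a''(x_0)$ for $a=\varphi_1 a''$; but $\varphi_1$ fixes $x_0$, not the point $a''(x_0)$, so that identity is false in general.

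More importantly, the obstacle you correctly single out in case (iii) is not a bookkeeping problem that a cleverer induction would dissolve: under the stated hypotheses the strong assertion is simply false. Take $V=\mathbb{N}^2\cup\{x_0\}$, $U=\mathbb{N}^2$, with $\varphi_1(m,n)=(m+1,n)$, $\varphi_2(m,n)=(m,n+1)$, $\varphi_1(x_0)=x_0$ and $\varphi_2(x_0)=(0,0)$. Both maps are injective and of infinite order, $U$ is invariant under both, $x_0\notin U$ is fixed by $\varphi_1$, and $\varphi_2(x_0)\in U$; yet $\varphi_1\varphi_2\varphi_2=\varphi_2\varphi_1\varphi_2$ as maps of $V$ (both send $(m,n)$ to $(m+1,n+2)$ and $x_0$ to $(1,1)$), so the semigroup is not free on $\{\varphi_1,\varphi_2\}$. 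Thus the hypotheses yield only the weaker conclusion the paper actually proves --- no nonempty positive word acts as the identity --- and no completion of your case (iii) can exist. Your instinct that this case is ``the genuine obstacle'' was exactly right; the fix is to weaken the conclusion (or to add a hypothesis, e.g.\ some point of $V\setminus U$ whose $\langle\varphi_1,\varphi_2\rangle$-orbit separates words), not to push the cancellation argument harder.
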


\begin{proof}  Suppose that the reduced word $\varphi=\varphi_{r_1}^{s_1}\cdots \varphi_{r_k}^{s_k}$ is the identity map. If $r_k=1$, then, since $\varphi$ is a reduced word and $U$ is invariant under  both maps, $x_0=\varphi(x_0)=\varphi_{r_1}^{s_1}\cdots \varphi_{r_k}^{s_k}(x_0)$ $=\varphi_{r_1}^{s_1}\cdots \varphi_{r_{k-1}}^{s_{k-1}}(x_0)\in U$.  If we have that $r_k=2$, then  $x_0=\varphi(x_0) \in U$, since $\varphi_2(x_0)\in U$.  In any case we have a contradiction because $x_0 \notin U$.  
\end{proof}

We now give an application of the above result in the context of  Pell and Gauss units.

\begin{theorem}
Let $\epsilon=x+y\sqrt{d}$ be the fundamental invertible of $\Q(\sqrt{d})$  and  $u=x+(y\sqrt{-d})i$. If  $w \in \{y\sqrt{-d}+xk,\ \frac{x+1}{2}-\frac{y\sqrt{-d}}{2}i+\frac{x-1}{2}j+\frac{y\sqrt{-d}}{2}k,\ x^2-(xy\sqrt{-d})i-y^2dj+(xy\sqrt{-d})k\}$. 
Then  $\langle u,w \rangle \subset \U((\frac{-1,-1}{\oo_K}))$ is a free semigroup.
\end{theorem}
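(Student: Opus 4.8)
The plan is to apply Lemma~\ref{mb} with $V=\Omega=\R\cup\{\infty\}$, $\varphi_1=\varphi_w$ and $\varphi_2=\varphi_u$, exhibiting a proper invariant subset $U\subsetneqq\Omega$ together with a fixed point $x_0\in\Omega\setminus U$ of $\varphi_w$ whose image under $\varphi_u$ lands in $U$. First I would record that both $\varphi_u$ and $\varphi_w$ are injective self-maps of $\Omega$ of infinite order: $\varphi_u(z)=\rho z$ with $\rho=\tfrac{x-y\sqrt d}{x+y\sqrt d}\in\,]0,1[$ has infinite order since $\rho$ is not a root of unity (indeed $\rho\in\,]0,1[$), and $\varphi_w$ has infinite order because, as established in Proposition~\ref{mbs} and used throughout the previous theorem, it is a hyperbolic M\"obius transformation conjugate to a homothety by a factor $\ne\pm1$ (for the last two units the trace condition again forces $x>1$). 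So the hypotheses of Lemma~\ref{mb} on the maps themselves are met.

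The heart of the argument is choosing $U$. Since $\varphi_u$ is a homothety centered at $0$, the natural candidate is a symmetric neighbourhood of the origin, say $U=\,]-c,c[$ for a suitable $c>0$, or more precisely a bounded open interval of the form $]a_1,b_1[$ as in the partitions of the previous theorem (with $a_1=-b_1$ in the first case, $a_1=-b_1/\rho$ in the third). Because $0<\rho<1$, such an interval is automatically invariant under $\varphi_u$: $\varphi_u(]-c,c[)=]-\rho c,\rho c[\subset\,]-c,c[$. I would then verify that the \emph{same} interval is invariant under $\varphi_w$. For $w=y\sqrt{-d}+xk$ this says $\varphi_w(]\tfrac12 z_p,-\tfrac12 z_p[)\subset\,]\tfrac12 z_p,-\tfrac12 z_p[$; this does not follow formally from Lemma~\ref{ntrvl} (the pole $z_p$ lies \emph{inside} the interval), so instead I would argue that $\varphi_w$ maps the complementary set $\Omega\setminus I$ into a small interval $J$ around $0$ contained in $I$, which is exactly the content of Proposition~\ref{gnrl}: there $J=\varphi_w(\Omega\setminus I)\subset[\tfrac32 z_0,\tfrac12 z_p]$, a sub-interval of $I$, and since $\varphi_w$ restricted to $I\setminus\{z_p\}$ is easily seen to stay in $I$ as well (the endpoints of $I$ are mapped into $I$ and there is no pole in $I\setminus\{z_p\}$), one gets $\varphi_w(I)\subset I$, hence $\varphi_w(U)\subset U$ with $U=I$. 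The remaining two choices of $w$ are handled identically using the corresponding inclusions of Proposition~\ref{gnrl} (with the asymmetric intervals $[3z'_p,\tfrac12 z'_0]$ etc.), noting that all the required strict inequalities reduce, as in the proof of the previous theorem, to $x>1$ or $x>2$.

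Finally I would locate the fixed point $x_0$. A hyperbolic M\"obius transformation has exactly two fixed points in $\overline{\C}$, both real here since $\varphi_w$ preserves $\Omega$; for $w=y\sqrt{-d}+xk$ one computes directly from $\varphi_w(z)=\tfrac{y\sqrt d\,z+x}{xz+y\sqrt d}$ that the fixed points satisfy $x z^2 = x$, i.e. $z=\pm1$, and both $\pm1$ lie outside the small interval $U$ (since the endpoints of $U$ are $O(1/x)$ or comparable quantities that are $<1$ for $x$ large — more honestly, one checks $|z_p|<1<$ is false, so I would instead pick whichever of the two fixed points lies outside $U$; at least one does because $U$ is a proper subset and the two fixed points are distinct, and in fact an explicit check shows $1\notin U$). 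Then I must verify $\varphi_u(x_0)\in U$: since $\varphi_u(x_0)=\rho x_0$ and $|\rho|<1$ is small (indeed $\rho<1/\epsilon^2$), $\rho x_0$ is pulled close to $0$ and lands inside $U$; this is a one-line estimate of the type $|\rho x_0|<c$ already implicit in Proposition~\ref{hmth}. With $U$, $x_0$, and $\varphi_u(x_0)\in U$ in hand, Lemma~\ref{mb} yields that $\langle u,w\rangle$ is a free semigroup. The main obstacle is the verification that $U$ is invariant under $\varphi_w$ despite $U$ containing the pole of $\varphi_w$: this is precisely where Proposition~\ref{gnrl} is needed, and one must be careful that the small ``return interval'' $J$ is genuinely contained in $U$ and that no point of $U$ escapes — a fact that, fortunately, the inclusions in Proposition~\ref{gnrl} were engineered to deliver.
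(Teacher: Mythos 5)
Your overall plan --- apply Lemma \ref{mb} on $\Omega=\R\cup\{\infty\}$ --- is the right one, but the specific choice of data has a fatal flaw, and it is exactly at the step you yourself flag as ``the main obstacle''. You take $U$ to be a bounded interval around $0$ (e.g.\ $]\tfrac12 z_p,-\tfrac12 z_p[\,=\,]-\tfrac{y\sqrt d}{2x},\tfrac{y\sqrt d}{2x}[$ for $w=y\sqrt{-d}+xk$) and claim $\varphi_w(U)\subset U$ because ``the endpoints of $I$ are mapped into $I$ and there is no pole in $I$''. The premise is false: $\varphi_w(0)=\tfrac{x}{y\sqrt d}>1$ (since $x^2-y^2d=1$ forces $x>y\sqrt d$), while every point of your $U$ has absolute value less than $\tfrac12$. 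So $0\in U$ but $\varphi_w(0)\notin U$, and no such interval can work: the forward orbit of $0$ under $\varphi_w$ converges to the attracting fixed point $z=1$, so any forward-invariant set containing $0$ must accumulate at $1$. Proposition \ref{gnrl} cannot rescue this, because it controls $\varphi_w(\Omega\setminus A_{1,1})$ where $A_{1,1}$ is the \emph{negative} interval $[\tfrac32 z_0,\tfrac12 z_p]$ containing the pole --- not the interval around $0$ that you are using. Your fallback for the fixed point (``pick whichever of the two fixed points lies outside $U$'') also leaves unverified the requirement $\varphi_u(x_0)\in U$ together with $x_0\notin U$ and invariance, so the three conditions of Lemma \ref{mb} are never simultaneously established.

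The paper's proof avoids all of this by swapping the roles of the two maps and taking $U$ unbounded: it sets $\varphi_1=\varphi_u$ (the homothety), $x_0=0$ its fixed point, and $U=\R^*_+$, the positive half-line. Then $\varphi_u(U)\subset U$ is trivial ($\rho>0$); and since the pole $-\tfrac{y\sqrt d}{x}$ of $\varphi_w$ is \emph{negative}, $\varphi_w$ maps $[0,\infty]$ homeomorphically onto the positive interval with endpoints $\varphi_w(0)=\tfrac{x}{y\sqrt d}$ and $\varphi_w(\infty)=\tfrac{y\sqrt d}{x}$, giving both $\varphi_w(U)\subset U$ and $\varphi_w(x_0)\in U$ at once (for the last two units one uses $\varphi_w^{-1}$ instead). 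If you want to salvage your version, replace your bounded interval by a half-line on which $\varphi_w$ (or its inverse) has no pole; as written, the invariance claim is simply wrong.
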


\begin{proof} Consider  $\varphi_1, \varphi_2 \in \mathcal{M}$, $V=\mathbb{R}\cup \{\infty \}$, $U=\mathbb{R}^*_+$, the set of positive real numbers,  and  $x_0=0$. We claim that the previous lemma can be applied using these data.

\noindent Suppose first that $w=y\sqrt{-d}+xk$. The map $\varphi_1(z)=\varphi_w(z)=\frac{x-y\sqrt{d}}{x+y\sqrt{d}}z$ clearly keeps $U$ invariant and fixes $x_0=0\notin U$. $\varphi_2(z)=\frac{x}{y\sqrt{d}}\frac{\frac{y\sqrt{d}}{x}z+1}{\frac{x}{y\sqrt{d}}z+1}$ has a pole at  $-\frac{y\sqrt{d}}{x}$, a zero at  $\frac{-x}{y\sqrt{d}}$ and $\varphi ({\infty})=\frac{y\sqrt{d}}{x}$. If $\mathcal{N}(\epsilon)=1$, then  
 $\varphi_2(0)=\frac{x}{y\sqrt{d}}>\frac{y\sqrt{d}}{x}>0$.  Hence, $\varphi_2(0)\in U$ and $\varphi_2 (U\cup\{0\}) =\ ]\frac{y\sqrt{d}}{x},\frac{x}{y\sqrt{d}}]\subset U$. If $\mathcal{N}(\epsilon)=-1$, then $\frac{y\sqrt{d}}{x}>\frac{x}{y\sqrt{d}}=\varphi_2(0)$. Hence $\varphi_2(0) \in U$ and $\varphi_2(U \cup \{0\})=[\frac{x}{y\sqrt{d}},\frac{y\sqrt{d}}{x}[ \subset U$. 

Second, suppose $w=\frac{x+1}{2}-\frac{y\sqrt{-d}}{2}i+\frac{x-1}{2}j+\frac{y\sqrt{-d}}{2}k$.  Set  $\varphi_2(z):=\varphi_w^{-1}(z)=\frac{-(y\sqrt{d}-(x+1))z+(y\sqrt{d}-(x-1))}{(y\sqrt{d}+(x-1))z+(y\sqrt{d}+(x+1))}$.   If $z_p$ and $z_0$ are, respectively, the pole and the zero of $\varphi_2$, then  $z_0<z_p$. Also, $\varphi_2(\infty )= \frac{-(y\sqrt{d}-(x+1))}{(y\sqrt{d}+(x-1))} <\frac{y\sqrt{d}-(x-1)}{y\sqrt{d}+(x+1)}=\varphi_2(0)$. Hence,  $\varphi_2(U)=]\varphi_2(\infty ),\varphi_2(0)[ \subset U$. 

Finally, if $w=x^2-(xy\sqrt{-d})i-y^2dj+(xy\sqrt{-d})k$, define $\varphi_2(z):=\varphi_w^{-1}(z)=\frac{(x^2-xy\sqrt{d})z+(y^2d+xy\sqrt{d})}{(-y^2d+xy\sqrt{d})z+(x^2+xy\sqrt{d})}$. The proof follows as in the previous item.

  Since all the conditions of the previous lemma  are satisfied, it follows that $\langle \varphi_1,\varphi_2 \rangle$  is a free semigroup.
\end{proof}

Let $u$ and $w$ be units in  $(\frac{-1,-1}{\oo_{\Q (\sqrt{-2})}})$. It follows from this theorem that if $u=1+(\sqrt{-2})i$ and $w=\sqrt{-2}+k$ then $<u,w>$ is a free semigroup, while we cannot decide whether the group generated by these units is free or not.

\bibliographystyle{amsalpha}
\par\vspace{.5cm}

$$\begin{array}{lll}
\textrm{Instituto de Matem\'atica e Estatística} && \textrm{Escola de Artes, Ci\^encias e Humanidades}\\
\textrm{Universidade de S\~ao Paulo(IME-USP)}  & & \textrm{Universidade de S\~ao Paulo (EACH-USP)}\\
\textrm{Caixa Postal 66281 }       & & \textrm{Rua Arlindo B\'ettio, 1000, Ermelindo Matarazzo}\\
\textrm{S\~ao Paulo, CEP 05315-970 - Brasil}                &  & \textrm{S\~ao Paulo, CEP 03828-000 - Brasil}\\
\textrm{email: ostanley@usp.br}               &   & \textrm{email: acsouzafilho@usp.br}
\end{array}$$

\end{document}